\theoremstyle{definition}
\newtheorem{theorem}{Theorem}[section]
\newtheorem{proposition}[theorem]{Proposition}
\newtheorem{lemma}[theorem]{Lemma}
\theoremstyle{definition}
\theoremstyle{remark}
\newtheorem{remark}[theorem]{Remark}
\newcommand{\ar}{r_1 - \lambda}
\newcommand{\br}{r_2 - \lambda}
\numberwithin{equation}{section}
\begin{document}

\title[]{Spectrum and fine spectrum of generalised lower triangular triple band matrices over the sequence space $l_p$}


\author{Arnab Patra}
\address{Arnab Patra, Department of mathematics, Indian Institute of Technology Kharagpur, India 721302}
\curraddr{}
\email{arnptr91@gmail.com}
\thanks{}

\author{P. D. Srivastava}
\address{P. D. Srivastava, Department of mathematics, Indian Institute of Technology Kharagpur, India 721302}
\curraddr{}
\email{pds@maths.iitkgp.ernet.in}
\thanks{}

%
%
%

\begin{abstract} 
The spectrum of triangular band matrices defined on the sequence spaces where the entries of each band is a constant or convergent sequence is well studied. In this article, the spectrum and fine spectrum of a new generalised difference operator defined by a lower triangular triple band matrix on the sequence space $l_p (1 \leq p < \infty)$ are obtained where the bands are considered as periodic sequences. The approximate point spectrum, defect spectrum, compression spectrum and the Goldberg classification of the spectrum are also discussed. Suitable examples are given in order to supplement the results. Several special cases of our findings are discussed which confirm that our study is more general and extensive. 
\end{abstract}

\maketitle
\section{Introduction}

In infinite dimensional Banach spaces, the spectrum of a bounded linear operator has more complex structure than the finite dimensional case. It can be partitioned into three disjoint parts, namely point spectrum, continuous spectrum and residual spectrum and these three parts of the spectrum is termed as fine spectrum of the operator.

Several researchers have studied the spectrum and fine spectrum of various bounded linear operators defined on various sequence spaces. Altay and Ba{\c{s}}ar \cite{deltac0c}, Kayaduman and Furkan \cite{deltal1bv}, Akhmedov and Ba{\c{s}}ar \cite{deltabvp} obtained the spctrum and fine spectrum of the difference operator $\Delta$ over the sequence spaces $c_0,$ $c$ and $l_1,$ $bv$ and $bv_p (1 \leq p < \infty)$ respectively. The fine spectrum of the generalised difference operator $B(r, s)$ is studied by Altay and Ba{\c{s}}ar \cite{c0c}, Furkan et al. \cite{l1bv}, Bilgi{\c{c}} and Furkan \cite{lpbvp} over the sequence spaces $c_0$ and $c$, $l_1$ and $bv$, $l_p$ and $bv_p$ $(1<p< \infty)$ respectively. Furkan et al. \cite{c0c3,lpbvp3}, Bilgi{\c{c}} and Furkan 
\cite{l1bv3} further generalised these results to the operator $B(r, s, t).$ The fine spectrum of triangular Toeplitz operator defined on $c_0$ and $c$ is obtained by Altun \cite{altun}. Srivastava and Kumar \cite{v_l1,uv_l1} have studied the spectrum and fine spectrm of the generalised differnece operators $\vartriangle_v$ and $\vartriangle_{uv}$ on $l_1.$  Later on, El-Shabrawy \cite{ab_lp,abc0} has studied the fine spectrum of the operator $\vartriangle_{ab}$ on the sequence space $l_p$ $(1< p< \infty)$ and $c_0$ respectively. The spectrum of the upper triangular matrix $A(\tilde{r}, \tilde{s})$ defined on $l_1$ and $l_p$ $(1< p< \infty)$ is studied by Karaisa \cite{upper_lp, upperl1}. In 2012, the fine spectrum of lower triangular triple band matrix $\Delta_{uvw}$ on $l_1$ is studied by Panigrahi and Srivastava \cite{pds1} and analogusly, the upper triangular case is studied by Altundag and Abay \cite{filomat1}. The spectrum and fine spectrum of the of the generalised difference operator $\vartriangle^r_v$ over the sequence spaces $c_0$ and $l_1$ have been studied by Dutta and Baliarsingh (see \cite{rv_c0, rv_l1}). In 2017, Birbonshi and Srivastava \cite{bir} have studied various spectral properties of $n$ band triangular matrices of constant bands. Recently El-Shabrawy and Abu-Janah \cite{bv0h} studied the spectrum of the operators $B(r, s)$ and $\Delta_{ab}$ over the sequence space $bv_0$ and $h.$

It has been observed that, the entries in each band of the band matrices chosen by the earlier researchers are either constant sequences or convergent sequences. In the present paper, we have considered an operator which is represented by a triple band lower triangular matrix defined over the sequence space $l_p (1 \leq p < \infty),$ where the bands are taken as periodic sequences of period two. The operator can be represented by the matrix

\begin{equation} \label{mainop3}
   B(r_1,r_2;s_1,s_2;t_1,t_2)=
  \left( {\begin{array}{cccccc}
   r_1   & 0      & 0      & 0      & 0 & \cdots \\
   s_1   & r_2    & 0      & 0      & 0 & \cdots \\
   t_1   & s_2    & r_1    & 0      & 0 & \cdots \\
   0     & t_2    & s_1    & r_2    & 0 & \cdots \\
   0     & 0      & t_1    & s_2    & r_1 & \cdots \\
  \vdots & \vdots & \vdots & \vdots & \vdots & \ddots 
  \end{array} } \right)
\end{equation}
where $r_1, r_2$ are complex numbers, $s_1, s_2$ are non-zero complex numbers and $t_1, t_2$ are complex numbers such that, either $t_1, t_2$ both are non-zero or both are equal to zero. For the sake of convenience we will use the notation $B$ instead of $B(r_1,r_2;s_1,s_2;t_1,t_2)$. Several operators whose spectrum and fine spectrum studied previously can be derived from the above operator with perticular choices of $r_1, r_2,$ $s_1, s_2$ and $t_1, t_2$. The purpose of this work is to obtain the spectrum and fine spectrum of the operator $B$ on the sequence space $l_p(1<p<\infty).$ The theory of system of linear difference equations plays an important role in our work. For detail study of difference equation we refer the book \cite{ela}.

The rest of the paper is organised as follows. Section 2 deals with some notations and review of few concepts of operator theory. Section 3 and 4 contains our results on the spectrum and some spectral classification of the operator $B$ over the sequence space $l_p(1<p<\infty)$ and $l_1$ respectively. Some examples are provided in Section 5 in support of our results, and finally, in Section 6 we conclude our paper by mentioning some previous results which can be derived from our results.

\section{Preliminaries and Notations}
In this paper, the sequence space $l_p (1 \leq p < \infty)$ represents the set of all $p$-absolutely summable sequences of real or complex numbers. All the infinite sequences and matrices are indexed by the set of natural numbers $\mathbb{N}.$ Let $T: X \rightarrow Y$ be a bounded linear operator where $X$ and $Y$ are complex Banach spaces. Then the range space of $T$ and null space of $T$ are denoted by $R(T)$ and $N(T)$ respectively. The adjoint of $T$, denoted by $T^*$, is a bounded linear operator $T^*: Y^* \rightarrow X^*$ which is defined by
\begin{equation*}
(T^* \phi)(x) = \phi(Tx) \ \ \ \mbox{for all} \ \phi \in Y^* \ \mbox{and} \ x \in X
\end{equation*}
where $X^*$ and $Y^*$ are the dual spaces of $X$ and $Y$ respectively. The set of all bounded linear operator on $X$ onto itself is denoted by $B(X).$ For any operator $T \in B(X),$ the resolvent set of $T$ is the set of all complex numbers $\lambda$ for which the operator $T - \lambda I$ has a bounded inverse in $X$ where $I$ is the identity operator in $X.$ The resolvent set of $T$ is denoted by $\rho(T,X).$ The compliment of the resolvant set in the complex plane $\mathbb{C}$ is called the spectrum of $T$ and it is denoted by $\sigma(T,X)$, i.e.,
\begin{equation*}
\sigma(T,X) = \{\lambda \in \mathbb{C}: (T- \lambda I) \ \mbox{has no inverse in} \  B(X)\}.
\end{equation*}
The spectrum $\sigma(T,X)$ can be classified into three disjoint sets as follows:

\noindent The set of points $\lambda \in \mathbb{C}$ for which $N(T- \lambda I) \neq \{0\}$ is called the point spectrum of $T$ and it is denoted by $\sigma_p(T,X)$. The set of points $\lambda \in \mathbb{C}$ for which $N(T- \lambda I) = \{0\}$ and $\overline{R(T- \lambda I)}= X $ but ${R(T- \lambda I)}\neq X $ is called the continuous spectrum of $T.$ It is denoted by $\sigma_c(T,X).$ The set of points $\lambda \in \mathbb{C}$ for which $N(T- \lambda I) = \{0\}$ and $\overline{R(T- \lambda I)}\neq X $ is called the residual spectrum of $T$ and it is denoted by $\sigma_r(T,X)$. 
The three sets $\sigma_p(T,X),\sigma_c(T,X),\sigma_r(T,X)$ are disjoint and their union is the whole spectrum $\sigma(T,X).$

Next we mention some more subdivisions of the spectrum. For any $T \in B(X)$ the approximate point spectrum, defect spectrum, and compression spectrum of $T$ is denoted by $\sigma_{ap}(T, X),$ $\sigma_{\delta}(T, X)$ and $\sigma_{co}(T, X)$ respectively and defined as,
\begin{eqnarray*}
\sigma_{ap}(T, X) &=& \{\lambda \in \mathbb{C} : (T - \lambda I) \ \ \mbox{is not bounded below} \},\\
\sigma_{\delta}(T, X) &=& \{\lambda \in \mathbb{C} : (T - \lambda I) \ \ \mbox{is not surjective} \},\\
\sigma_{co}(T, X) &=& \{\lambda \in \mathbb{C} : \overline{R(T - \lambda I)} \neq X \}.
\end{eqnarray*}
The two sets $\sigma_{ap}(T, X)$ and $\sigma_{\delta}(T, X)$ form a subdivision (not necessarily disjoint) of spectrum, i.e.,
\begin{equation*}
\sigma(T,X) = \sigma_{ap}(T, X) \cup \sigma_{\delta}(T, X)
\end{equation*}
and $\sigma_{ap}(T, X),$ $\sigma_{co}(T, X)$ also form a subdivision (not necessarily disjoint)  of spectrum, i.e.,
\begin{equation*}
\sigma(T,X) = \sigma_{ap}(T, X) \cup \sigma_{co}(T, X).
\end{equation*}
The following results are useful in this context.

\begin{proposition} \cite[p. 28]{nonlinear} \label{proposition}
The following relations hold for an operator $T \in B(X)$
\begin{enumerate}
\item[(a)] $\sigma (T^{*}, X^*) = \sigma (T,X),$ 
\item[(b)] $\sigma_{ap} (T^{*}, X^*) = \sigma_{\delta} (T, X),$ 
\item[(c)] $\sigma_{p} (T^{*}, X^*) = \sigma_{co} (T, X),$ 
\item[(d)] $\sigma (T, X) = \sigma_{ap} (T, X) \cup \sigma_{p} (T^{*}, X^*) = \sigma_{p} (T, X) \cup \sigma_{ap} (T^{*}, X^*).$
\end{enumerate}

\end{proposition}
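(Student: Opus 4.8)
The plan is to derive all four relations from the standard duality dictionary between a bounded operator $S$ and its Banach-space adjoint $S^{*}$, applied to $S = T - \lambda I$. The crucial preliminary observation is that, with the adjoint defined by $(T^{*}\phi)(x) = \phi(Tx)$, one has $(T - \lambda I)^{*} = T^{*} - \lambda I$; this is the transpose rather than the Hilbert-space adjoint, so no complex conjugation of $\lambda$ enters. Consequently every structural statement about $T - \lambda I$ transfers verbatim to $T^{*} - \lambda I$, and the whole proposition reduces to four facts from duality theory that I would invoke or quickly verify. The analytic engine throughout is the Closed Range Theorem together with the annihilator identities $N(S^{*}) = R(S)^{\perp}$ and $\overline{R(S)} = {}^{\perp}N(S^{*})$.

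For part (a) I would use that a bounded operator $S$ on a Banach space has a bounded inverse if and only if $S^{*}$ does: when $S$ is invertible this is immediate from $(S^{-1})^{*} = (S^{*})^{-1}$, and the converse follows from the Closed Range Theorem plus the annihilator identities. Applying this to $S = T - \lambda I$ shows $\lambda \in \rho(T, X)$ if and only if $\lambda \in \rho(T^{*}, X^{*})$, and taking complements in $\mathbb{C}$ gives $\sigma(T^{*}, X^{*}) = \sigma(T, X)$. For parts (b) and (c) I would lean on the same annihilator machinery. Part (c) is cleanest: $\lambda \in \sigma_{p}(T^{*}, X^{*})$ means $N(T^{*} - \lambda I) = R(T - \lambda I)^{\perp} \neq \{0\}$, which by Hahn--Banach is equivalent to $\overline{R(T - \lambda I)} \neq X$, i.e. $\lambda \in \sigma_{co}(T, X)$. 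For part (b) I would use the duality ``$S$ surjective $\iff$ $S^{*}$ bounded below'' (proved by checking that surjectivity forces $N(S^{*}) = R(S)^{\perp} = \{0\}$ with $R(S^{*})$ closed, and conversely), so that $\lambda \in \sigma_{\delta}(T, X)$, meaning $T - \lambda I$ is not surjective, is equivalent to $T^{*} - \lambda I$ not being bounded below, i.e. $\lambda \in \sigma_{ap}(T^{*}, X^{*})$.

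Finally, part (d) I would deduce purely formally from the two subdivisions of the spectrum already recorded in the text together with (b) and (c) and the elementary inclusions $\sigma_{p}(T, X) \subseteq \sigma_{ap}(T, X)$ and $\sigma_{ap}(T, X) \subseteq \sigma_{p}(T, X) \cup \sigma_{\delta}(T, X)$; the latter holds because an injective operator that fails to be bounded below has non-closed range and hence cannot be surjective. Concretely, $\sigma(T, X) = \sigma_{ap}(T, X) \cup \sigma_{co}(T, X) = \sigma_{ap}(T, X) \cup \sigma_{p}(T^{*}, X^{*})$ yields the first equality, while $\sigma(T, X) = \sigma_{ap}(T, X) \cup \sigma_{\delta}(T, X) = \sigma_{p}(T, X) \cup \sigma_{\delta}(T, X) = \sigma_{p}(T, X) \cup \sigma_{ap}(T^{*}, X^{*})$ yields the second. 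The only real obstacle is bookkeeping: keeping the transpose adjoint consistent so that no spurious conjugation of $\lambda$ appears, and invoking the Closed Range Theorem precisely enough that ``bounded below'' (injective with closed range) is correctly matched against ``surjective'' on the dual side. Since all the genuine content is packaged inside the Closed Range Theorem and the Hahn--Banach annihilator duality, I expect the argument to be short and essentially a translation exercise.
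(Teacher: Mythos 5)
Your proof is correct, but the comparison here is lopsided: the paper does not prove this proposition at all --- it is imported verbatim, with a citation, from Appell, De Pascale and Vignoli \cite[p.~28]{nonlinear}, and is used downstream purely as a black box. So what you have done is supply the standard textbook proof that the paper delegates to its reference. Your argument is the usual duality dictionary and the steps are sound: $(T-\lambda I)^{*}=T^{*}-\lambda I$ for the Banach-space transpose (no conjugation of $\lambda$), part (a) from $(S^{-1})^{*}=(S^{*})^{-1}$ together with the Closed Range Theorem for the converse, part (c) from $N(S^{*})=R(S)^{\perp}$ plus Hahn--Banach, part (b) from the duality ``$S$ surjective iff $S^{*}$ bounded below'', and part (d) purely formally from (b), (c), the two subdivisions $\sigma(T,X)=\sigma_{ap}(T,X)\cup\sigma_{\delta}(T,X)=\sigma_{ap}(T,X)\cup\sigma_{co}(T,X)$ already recorded in the paper's preliminaries, and the inclusions $\sigma_{p}(T,X)\subseteq\sigma_{ap}(T,X)\subseteq\sigma_{p}(T,X)\cup\sigma_{\delta}(T,X)$; your justification of the second inclusion (injective but not bounded below forces non-closed, hence proper, range) is exactly right. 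One small point if you were to write this out in full: in part (b) your parenthetical argument for the forward direction only establishes that $S^{*}$ is injective with closed range, so you still need the bounded inverse theorem to convert that into boundedness below --- or, more directly, apply the open mapping theorem to the surjective $S$ to get $c>0$ with $\|S^{*}\phi\|\geq c\|\phi\|$ for all $\phi$; this is a one-line repair, not a gap. What your route buys is a self-contained proof resting only on standard functional analysis; what the paper's bare citation buys is brevity, which is defensible since the proposition is background material rather than one of the paper's contributions.
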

Depending upon the set $R(T_{\lambda})$ and the inverse $T_{\lambda}^{-1}$ of an operator $T \in B(X)$ where $T_\lambda = T- \lambda I,$ Goldberg \cite{goldberg} classified the spectrum and the resolvant set of $T$ which is given in the following table.

\begin{table}[h]  \label{table1}
\begin{center}
\begin{scriptsize}
\begin{tabular}{ |c c| c| c| c| } 
 \hline
  &  & 1 & 2 & 3 \\

  & &   $T_{\lambda}^{-1}$ exists and bounded &  $T_{\lambda}^{-1}$ exists and unbounded &  $T_{\lambda}^{-1}$ does not exist \\ 
 \hline
 A &   $R(T_{\lambda}) = X$ &  $\lambda \in \rho(T, X)$ & --- & $ \begin{aligned} \lambda &\in \sigma_p(T, X)\\ \lambda &\in \sigma_{ap}(T, X) \end{aligned}$ \\
  \hline  
B &  $\overline{R(T_{\lambda})} = X$ &   $\lambda \in \rho(T, X)$& $ \begin{aligned} \lambda &\in \sigma_c(T, X)\\ \lambda &\in \sigma_{\delta}(T, X) \\ \lambda &\in \sigma_{ap}(T, X) \end{aligned}$ & $ \begin{aligned} \lambda &\in \sigma_p(T, X)\\ \lambda &\in \sigma_{\delta}(T, X) \\ \lambda &\in \sigma_{ap}(T, X) \end{aligned}$ \\
 \hline
 C &  $\overline{R(T_{\lambda})} \neq X$ &  $ \begin{aligned} \lambda &\in \sigma_r(T, X)\\ \lambda &\in \sigma_{\delta}(T, X) \\ \lambda &\in \sigma_{co}(T, X) \end{aligned}$& $ \begin{aligned} \lambda &\in \sigma_r(T, X)\\ \lambda &\in \sigma_{\delta}(T, X) \\ \lambda &\in \sigma_{co}(T, X)\\ \lambda &\in \sigma_{ap}(T, X) \end{aligned}$&$ \begin{aligned} \lambda &\in \sigma_p(T, X)\\ \lambda &\in \sigma_{\delta}(T, X) \\ \lambda &\in \sigma_{co}(T, X)\\ \lambda &\in \sigma_{ap}(T, X)\\  \end{aligned}$ \\
 \hline
\end{tabular}
\end{scriptsize}
\caption{Subdivisions of spectrum of a linear operator}
\end{center}
\end{table}

If we combine the possibilities A, B, C and 1, 2, 3 then nine different states are created. These are labelled by  $A_1$, $A_2$, $A_3$, $B_1$, $B_2$, $B_3,$ $C_1$, $C_2$, $C_3.$ For example, for any $\lambda \in \mathbb{C},$ $T-\lambda I \in A_1$ or $T-\lambda I \in B_1$ then $\lambda \in \rho(T, X).$ If the operator $T$ is in state $C_1$ for example, then $(T- \lambda I)^{-1}$ exists and bounded and $\overline{R(T_{\lambda})} \neq X$ and we write $\lambda \in C_1 \sigma(T, X).$

Let $A=(a_{nk})$ be an infinite matrix of complex numbers and $\lambda$ and $\mu$ be two sequence spaces. Then, the matrix $A$ defines a matrix mapping from $\lambda$ into $\mu$ if for every sequence $x=(x_k) \in \lambda$ the sequence $Ax = \{(Ax)_n\}$ is in $\mu$ where
\[(Ax)_n = \sum\limits_k a_{nk}x_k, \ n \in \mathbb{N} \]
and it is denoted by $A: \lambda \rightarrow \mu.$ We denote the class of all matrices $A$ such that $A: \lambda \rightarrow \mu$ by $(\lambda, \mu).$

\begin{lemma} \cite[p. 59]{goldberg} \label{denserange}
The bounded linear operator $T : X \rightarrow Y$ has dense range if and only if $A^*$ is one to one
\end{lemma}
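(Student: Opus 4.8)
The plan is to prove both implications using the duality between the range of $T$ and the null space of its adjoint, with the Hahn--Banach theorem supplying the nontrivial direction. Throughout I read $A^*$ as the adjoint operator $T^*$ introduced in the preliminaries, so the assertion to establish is that $\overline{R(T)} = Y$ if and only if $T^*$ is injective.

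First I would dispose of the easy implication. Suppose $T$ has dense range, and let $\phi \in Y^*$ satisfy $T^*\phi = 0$. By the defining relation $(T^*\phi)(x) = \phi(Tx)$, this means $\phi(Tx) = 0$ for every $x \in X$, so $\phi$ annihilates $R(T)$. Since $\phi$ is continuous and $R(T)$ is dense in $Y$, the functional $\phi$ must vanish on all of $Y = \overline{R(T)}$, whence $\phi = 0$. Thus $N(T^*) = \{0\}$, i.e. $T^*$ is one to one.

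For the converse I would argue by contraposition. Suppose $T$ does not have dense range, so that $\overline{R(T)}$ is a proper closed subspace of $Y$, and choose a point $y_0 \in Y \setminus \overline{R(T)}$. By the Hahn--Banach separation theorem there is a nonzero functional $\phi \in Y^*$ vanishing on the closed subspace $\overline{R(T)}$ (in particular on $R(T)$) while $\phi(y_0) \neq 0$. Then $\phi(Tx) = 0$ for all $x \in X$, which reads $(T^*\phi)(x) = 0$ for all $x$, so $T^*\phi = 0$ with $\phi \neq 0$. Hence $T^*$ is not injective. Contrapositively, injectivity of $T^*$ forces $\overline{R(T)} = Y$.

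The only genuine obstacle is the converse, and there the decisive ingredient is the existence of a separating functional: without the Hahn--Banach theorem one cannot guarantee a nonzero continuous linear functional annihilating a proper closed subspace of $Y$. Everything else is a routine unwinding of the definition of the adjoint together with the continuity of bounded functionals on $Y$.
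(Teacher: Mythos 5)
Your proof is correct. Note that the paper itself gives no proof of this lemma---it is quoted verbatim from Goldberg's book (the ``$A^*$'' in the statement is simply a typo for $T^*$, as you rightly interpreted)---and your argument, annihilating the range by a functional in $N(T^*)$ for one direction and invoking Hahn--Banach to separate a point from the proper closed subspace $\overline{R(T)}$ for the other, is exactly the standard proof of this classical duality fact.
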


\begin{lemma} \cite[p. 126]{wilansky} \label{boundedl1}
A matrix $A=(a_{nk})$ gives rise to a bounded linear operator $T \in B(l_1)$ from $l_1$ to itself if and only if the supremum of $l_1$ norms of the columns of A is bounded.
\end{lemma}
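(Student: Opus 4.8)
The plan is to establish the two implications of the biconditional separately, exploiting the special structure of the $l_1$ norm together with the action of $A$ on the canonical unit vectors. The $l_1$ setting is what makes the column-sum criterion work, so the argument is essentially a bookkeeping computation rather than a deep one.

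For the sufficiency direction I would assume $M := \sup_k \sum_n |a_{nk}| < \infty$ and show that $T$ is well defined and bounded on $l_1$ with $\norm{T} \le M$. Given $x = (x_k) \in l_1$, the central estimate is
\[
\norm{Ax} = \sum_n \Big| \sum_k a_{nk} x_k \Big| \le \sum_n \sum_k |a_{nk}|\,|x_k| = \sum_k |x_k| \sum_n |a_{nk}| \le M \norm{x},
\]
where the interchange of the order of summation in the double series is the one genuinely nontrivial point. I would justify it by Tonelli's theorem for series of nonnegative terms: since every summand $|a_{nk}|\,|x_k|$ is nonnegative, the two iterated sums coincide (and equal the double sum) regardless of whether the common value is finite. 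The finiteness of the final bound then simultaneously shows that each coordinate $(Ax)_n = \sum_k a_{nk} x_k$ is an absolutely convergent series, that $Ax \in l_1$, and that $\norm{Ax} \le M \norm{x}$; hence $T \in B(l_1)$.

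For the converse I would assume $T \in B(l_1)$ and test it against the standard basis. Writing $e^{(k)} = (0, \dots, 0, 1, 0, \dots)$ with the $1$ in the $k$-th coordinate, one has $e^{(k)} \in l_1$ with $\norm{e^{(k)}} = 1$, and $A e^{(k)}$ is exactly the $k$-th column of $A$, so that $\norm{A e^{(k)}} = \sum_n |a_{nk}|$. Boundedness of $T$ then forces
\[
\sum_n |a_{nk}| = \norm{A e^{(k)}} \le \norm{T}\, \norm{e^{(k)}} = \norm{T} \qquad \text{for every } k,
\]
whence $\sup_k \sum_n |a_{nk}| \le \norm{T} < \infty$, which is the asserted condition.

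Combining the two bounds in fact yields the sharper identity $\norm{T} = \sup_k \sum_n |a_{nk}|$, so the column-sum supremum is precisely the operator norm; I would include this as a remark since it costs nothing extra. I do not anticipate any serious obstacle. The only care required is the Tonelli justification for rearranging the double series in the sufficiency part, and the (immediate) observation that the finitely supported vectors $e^{(k)}$ genuinely lie in $l_1$ so that evaluating $T$ on them is legitimate. Together these make the argument fully rigorous.
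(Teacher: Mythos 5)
This lemma is quoted in the paper from Wilansky \cite[p.~126]{wilansky} and the paper itself supplies no proof, so there is nothing internal to compare against; your argument is the standard textbook proof and it is correct. Both directions are sound: the Tonelli interchange $\sum_n \sum_k |a_{nk}|\,|x_k| = \sum_k |x_k| \sum_n |a_{nk}|$ is exactly the right justification for sufficiency, and testing $T$ on the unit vectors $e^{(k)}$ gives necessity, with the two estimates combining to the sharp identity $\norm{T} = \sup_k \sum_n |a_{nk}|$ (which is indeed the form in which the result is usually stated, and which the paper implicitly uses when computing $\|B\|_{l_1}$ in Section 4).
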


\begin{lemma} \cite[p. 126]{wilansky} \label{boundedl}
A matrix $A=(a_{nk})$ gives rise to a bounded linear operator $T \in B(l_\infty)$ from $l_\infty$ to itself if and only if the supremum of $l_1$ norms of the rows of A is bounded.
\end{lemma}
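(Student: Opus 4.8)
The plan is to prove both implications and, in passing, identify the operator norm as $\norm{T} = \sup_n \sum_k |a_{nk}|$, the supremum of the $l_1$ norms of the rows of $A$. Throughout write $M := \sup_n \sum_k |a_{nk}|$, so that the asserted condition is exactly $M < \infty$.

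For the sufficiency direction (the easier half) I would assume $M < \infty$ and verify directly that $A$ induces a bounded self-map $T$ of $l_\infty$. Given any $x = (x_k) \in l_\infty$, for each fixed row index $n$ the defining series $(Tx)_n = \sum_k a_{nk} x_k$ converges absolutely, since $\sum_k |a_{nk}||x_k| \le \norm{x}_\infty \sum_k |a_{nk}| \le M \norm{x}_\infty$. The very same estimate gives $|(Tx)_n| \le M \norm{x}_\infty$ for every $n$, whence $\norm{Tx}_\infty \le M \norm{x}_\infty$. Thus $Tx \in l_\infty$ and $\norm{T} \le M$.

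For the necessity direction I would assume $T \in B(l_\infty)$ and recover the row sums by testing against suitable unimodular sequences of finite support. Fix a row index $n$ and a truncation level $N$, and define $x^{(N)} = (x_k)$ by $x_k = \overline{a_{nk}}/|a_{nk}|$ whenever $k \le N$ and $a_{nk} \neq 0$, and $x_k = 0$ otherwise. Then $\norm{x^{(N)}}_\infty \le 1$, while the $n$-th coordinate of $Tx^{(N)}$ is exactly $\sum_{k \le N} |a_{nk}|$. Hence $\sum_{k \le N} |a_{nk}| \le \norm{Tx^{(N)}}_\infty \le \norm{T}$, and since $N$ is arbitrary, letting $N \to \infty$ gives $\sum_k |a_{nk}| \le \norm{T}$; taking the supremum over $n$ yields $M \le \norm{T} < \infty$. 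Combining the two halves establishes the equivalence and the norm identity $\norm{T} = M$.

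The main obstacle is the necessity direction: one must construct the finitely supported phase vectors $x^{(N)}$ so that each product $a_{nk} x_k$ collapses to the nonnegative real number $|a_{nk}|$, and must keep the support finite so that $x^{(N)} \in l_\infty$ with norm at most one is guaranteed \emph{before} any convergence of the full row series is known, the limit $N \to \infty$ being taken only afterwards. An alternative route bypasses this construction by duality: the transpose $A^{T}$ has column $l_1$ norms equal to the row $l_1$ norms of $A$, so by Lemma \ref{boundedl1} the condition $M < \infty$ is precisely the condition that $A^{T}$ act boundedly on $l_1$; since $l_\infty = (l_1)^{*}$ and $T$ is the adjoint of the operator induced by $A^{T}$ on $l_1$, boundedness transfers with $\norm{T} = \norm{A^{T}}_{B(l_1)}$. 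I would present the direct argument as the primary proof, since it is self-contained and delivers the norm identity at once, and mention the duality argument only as a shortcut for sufficiency.
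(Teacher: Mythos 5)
Your proof is correct, but there is nothing in the paper to compare it against: the paper does not prove this statement, it imports it as Lemma \ref{boundedl} with a citation to Wilansky, so your argument supplies exactly what the paper delegates to the reference. Both halves are sound. Sufficiency is the one-line estimate $|(Tx)_n|\le M\|x\|_\infty$, giving $\|T\|\le M$. For necessity, the finitely supported phase vectors $x^{(N)}$ are the right test objects: because the support is finite, $(Tx^{(N)})_n=\sum_{k\le N}|a_{nk}|$ is a genuine finite sum, so no a priori summability of the row is assumed, and the uniform bound $\sum_{k\le N}|a_{nk}|\le\|T\|$ passes to the limit in $N$ and then to the supremum over $n$, yielding $M\le\|T\|$ and the norm identity $\|T\|=M$ (which the bare statement does not even claim). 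You are also right to confine the duality shortcut (Lemma \ref{boundedl1} applied to $A^{T}$ together with $l_\infty=(l_1)^*$) to the sufficiency direction: to use it for necessity one would have to know beforehand that the bounded operator $T$ on $l_\infty$ arises as the adjoint of a bounded operator on $l_1$, which is essentially the conclusion being sought. One stylistic remark: in the necessity half you may note explicitly that the hypothesis ``$A$ gives rise to $T\in B(l_\infty)$'' already includes convergence of $\sum_k a_{nk}x_k$ for every $x\in l_\infty$; your truncation argument never uses this for infinite sums, which is what makes it clean.
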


\begin{lemma} \cite[p. 174, Theorem 9]{maddox} \label{boundedlp}
Let $1 < p < \infty$ and suppose $A \in (l_1, l_1) \cap (l_\infty, l_\infty)$. Then $A \in  (l_p, l_p).$
\end{lemma}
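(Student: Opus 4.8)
The plan is to give a direct, self-contained interpolation argument rather than invoking the Riesz--Thorin theorem as a black box (though that route works immediately, taking endpoints $p_0=q_0=1$ and $p_1=q_1=\infty$ and the intermediate exponent $\theta=1-1/p$). First I would record the two quantitative hypotheses supplied by Lemmas~\ref{boundedl1} and~\ref{boundedl}: since $A\in(l_1,l_1)$ the column sums are uniformly bounded, so $M_1 := \sup_k \sum_n |a_{nk}| < \infty$, and since $A\in(l_\infty,l_\infty)$ the row sums are uniformly bounded, so $M_\infty := \sup_n \sum_k |a_{nk}| < \infty$. Fixing $p\in(1,\infty)$ with conjugate exponent $q=p/(p-1)$, the goal is to produce a constant $C$ with $\norm{Ax}_p \le C\norm{x}_p$ for every $x\in l_p$, which also certifies that $Ax$ is well defined in $l_p$.

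The key step is the factorisation $|a_{nk}| = |a_{nk}|^{1/q}\,|a_{nk}|^{1/p}$, which lets H\"older's inequality act within each row. Applying H\"older with exponents $q$ and $p$ to $(Ax)_n = \sum_k a_{nk}x_k$ gives
\begin{equation*}
|(Ax)_n| \le \sum_k |a_{nk}|^{1/q}\bigl(|a_{nk}|^{1/p}|x_k|\bigr) \le \Bigl(\sum_k |a_{nk}|\Bigr)^{1/q}\Bigl(\sum_k |a_{nk}|\,|x_k|^p\Bigr)^{1/p}.
\end{equation*}
Bounding the first factor by $M_\infty^{1/q}$, raising to the $p$-th power, and summing over $n$ reduces the problem to controlling $\sum_n\sum_k |a_{nk}|\,|x_k|^p$.

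At this point I would interchange the order of summation, which is legitimate because every term is nonnegative (Tonelli for series), to obtain $\sum_k |x_k|^p \sum_n |a_{nk}| \le M_1 \sum_k |x_k|^p = M_1\norm{x}_p^p$. Combining the estimates yields $\norm{Ax}_p^p \le M_\infty^{p/q} M_1 \norm{x}_p^p$, that is $\norm{Ax}_p \le M_\infty^{1-1/p} M_1^{1/p}\norm{x}_p$, which is precisely the Riesz--Thorin interpolation bound and shows $A\in(l_p,l_p)$. The only genuinely delicate point is the H\"older factorisation in the displayed line: one must split the weight $|a_{nk}|$ across the two conjugate exponents so that the leftover row sum collapses to $M_\infty$ while the weighted sum over $k$ can later be converted into a column sum bounded by $M_1$. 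Once that splitting is spotted, the remaining manipulations are routine and the interchange of summation is harmless.
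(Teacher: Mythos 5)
Your proof is correct, but there is nothing in the paper to compare it against step by step: Lemma~\ref{boundedlp} is stated there as a quoted result, cited to Maddox [p.~174, Theorem~9], and is used as a black box (most importantly at the end of the proof of Theorem~\ref{spectrum_l1}, where $(B-\lambda I)^{-1}\in(l_1,l_1)\cap(l_\infty,l_\infty)$ is upgraded to $(B-\lambda I)^{-1}\in(l_p,l_p)$). What you have written is the standard self-contained Schur-test proof of this discrete special case of Riesz--Thorin, and every step checks out: the factorisation $|a_{nk}|=|a_{nk}|^{1/q}\,|a_{nk}|^{1/p}$ makes H\"older applicable in $k$ because $\bigl(|a_{nk}|^{1/q}\bigr)^q=|a_{nk}|$ and $\bigl(|a_{nk}|^{1/p}|x_k|\bigr)^p=|a_{nk}|\,|x_k|^p$; the row-sum factor is bounded by $M_\infty^{1/q}$ using Lemma~\ref{boundedl}; the interchange of the two summations is legitimate by Tonelli since all terms are nonnegative; and the resulting column sums are controlled by $M_1$ from Lemma~\ref{boundedl1}. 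This yields $\norm{Ax}_p\le M_1^{1/p}M_\infty^{1-1/p}\norm{x}_p$, and the finiteness of this bound also forces absolute convergence of each series $(Ax)_n=\sum_k a_{nk}x_k$, so $Ax$ is well defined, a point you correctly flag. Compared with the paper's bare citation, your argument buys self-containedness and an explicit quantitative bound on the operator norm (hence, applied to $(B-\lambda I)^{-1}$, an explicit resolvent estimate), at the cost of a paragraph of routine estimates; the alternative you mention, invoking the full Riesz--Thorin theorem, would be shorter still but imports a far heavier tool than this two-line interpolation requires.
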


\begin{remark}
Throughout our work, if $z$ is a complex number then $\sqrt{z}$ means the square root of $z$ with non-negative real part. If $Re(\sqrt{z}) = 0$ then $\sqrt{z}$ means the square root of $z$ with $Im(z) \geq 0.$
\end{remark}

\section{Spectra of $B$ on $l_p (1 < p < \infty)$}

\begin{theorem}\label{norm}
The operator $B: l_p \rightarrow l_p$ is a bounded linear operator and $B_1 \leq \|B\|_{l_p} \leq B_2$ where
\begin{eqnarray*}
B_1 &=& \max \{(|r_1|^p + |s_1|^p + |t_1|^p)^{\frac{1}{p}} , (|r_2|^p + |s_2|^p + |t_2|^p)^{\frac{1}{p}} \},\\
B_2 &=& \max \{|r_1|, |r_2|\} + \max \{|s_1|, |s_2|\} + \max \{|t_1|, |t_2|\} .
\end{eqnarray*}
\end{theorem}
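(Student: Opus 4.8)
The plan is to treat linearity and the two norm inequalities separately. Linearity of $B$ is immediate since it acts through a fixed matrix, so the substance of the statement is the sandwich $B_1 \le \|B\|_{l_p} \le B_2$; the upper estimate will simultaneously establish that $B$ is bounded, i.e. $B \in B(l_p)$.

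For the lower bound I would test $B$ against the first two standard basis vectors $e_1 = (1,0,0,\dots)$ and $e_2 = (0,1,0,\dots)$, both of unit $l_p$-norm. Reading off the first two columns of \eqref{mainop3} gives $Be_1 = (r_1, s_1, t_1, 0, 0, \dots)$ and $Be_2 = (0, r_2, s_2, t_2, 0, \dots)$, so that $\|Be_1\|_{l_p} = (|r_1|^p + |s_1|^p + |t_1|^p)^{1/p}$ and $\|Be_2\|_{l_p} = (|r_2|^p + |s_2|^p + |t_2|^p)^{1/p}$. Since $\|B\|_{l_p} \ge \|Be_k\|_{l_p}$ for $k = 1, 2$, taking the larger of the two right-hand sides yields $\|B\|_{l_p} \ge B_1$.

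For the upper bound I would decompose $B$ into its three bands, $B = D + S + T$, where $D$ is the diagonal operator with the period-two entries $r_1, r_2, r_1, \dots$, $S$ carries the first-subdiagonal weights $s_1, s_2, s_1, \dots$, and $T$ carries the second-subdiagonal weights $t_1, t_2, t_1, \dots$. Each summand is controlled on $l_p$ by the supremum of the moduli of its weights: clearly $\|Dx\|_{l_p} \le \max\{|r_1|,|r_2|\}\,\|x\|_{l_p}$, while $\|Sx\|_{l_p}^p = \sum_{k\ge 1} |w_k|^p |x_k|^p$ with each weight $w_k \in \{s_1, s_2\}$, giving $\|Sx\|_{l_p} \le \max\{|s_1|,|s_2|\}\,\|x\|_{l_p}$, and similarly $\|Tx\|_{l_p} \le \max\{|t_1|,|t_2|\}\,\|x\|_{l_p}$. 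Applying the triangle inequality for the operator norm then produces $\|B\|_{l_p} \le \max\{|r_1|,|r_2|\} + \max\{|s_1|,|s_2|\} + \max\{|t_1|,|t_2|\} = B_2$, which in particular shows $B$ is bounded.

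I do not expect a serious obstacle here; the only points needing mild care are keeping the period-two indexing of the bands straight when reading off $Be_1, Be_2$ and when identifying the shift weights, together with the short reindexing of the sum that yields the weighted-shift estimate. As an alternative path to boundedness alone (but without the sharp constant $B_2$) I could instead check via Lemma~\ref{boundedl1} that the supremum of the $l_1$-norms of the columns is finite and via Lemma~\ref{boundedl} that the supremum of the $l_1$-norms of the rows is finite, and then invoke Lemma~\ref{boundedlp} to conclude $B \in (l_p, l_p)$; this confirms membership in $B(l_p)$ but does not directly deliver $B_2$, so I would retain the decomposition argument for the stated estimate.
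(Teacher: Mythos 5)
Your proposal is correct and takes essentially the same approach as the paper: the paper's one-line upper estimate is precisely the Minkowski/triangle-inequality step that you make explicit through the decomposition $B = D + S + T$, and its lower bound likewise evaluates $B$ on the standard basis vectors $e^{(k)}$ (odd and even $k$), exactly as you do with $e_1$ and $e_2$. No gaps.
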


\begin{proof}
The linearity of $B$ is easy to check so we drop it. For any $(x_k) \in l_p$ with $x_{-1} = x_0 = 0,$ we have
\begin{eqnarray*}
\|Bx\|_{l_p} &=& [|t_1x_{-1}+s_2x_0+r_1x_1|^p+|t_2x_0+s_1x_1+r_2x_2|^p+|t_1x_1+s_2x_2+r_1x_3|^p+\cdots]^{\frac{1}{p}}\\
            &\leq& (\max \{|r_1|, |r_2|\} + \max \{|s_1|, |s_2|\} + \max \{|t_1|, |t_2|\}) \|x\|_{l_p}.
\end{eqnarray*}

This shows that $\|B\|_{l_p} \leq B_2.$ Also let $e^{(k)}$ be a sequence in $l_p$ whose $k$-th component is one and other components are zero. Then 
\begin{equation*}
\|B\|_{l_p} \geq \frac{\|Be^{(k)}\|_{l_p}}{\|e^{(k)}\|_{l_p}} = \left\{
\begin{aligned}
&(|r_1|^p + |s_1|^p + |t_1|^p)^{\frac{1}{p}}, \ k \ \mbox{is odd}&\\
&(|r_2|^p + |s_2|^p + |t_2|^p)^{\frac{1}{p}}, \ k \ \mbox{is even}.&
\end{aligned}
\right.
\end{equation*}
This proves the other part.
\end{proof}

\begin{theorem} \label{spectrum_l1}
Let $s_1, s_2$ be two complex numbers such that $\sqrt{s_1^2} = s_1$ and $\sqrt{s_2^2} = s_2$ then the spectrum of $B$ on $l_p$ is given by 
\begin{equation*}
\sigma(B, l_p) = \left\lbrace \lambda \in \mathbb{C} : \left| \frac{2 (r_1 - \lambda) (r_2 - \lambda)}{s_1s_2 - t_1(r_2 - \lambda)- t_2(r_1 - \lambda) + \sqrt{\chi}} \right| \leq 1 \right\rbrace
\end{equation*} 
where 
\begin{eqnarray} \label{chi}
\chi &=& t_1^2(r_2 - \lambda)^2 + t_2^2(r_1 - \lambda)^2 + s_1^2 s_2^2 - 2t_1t_2(r_1 - \lambda) (r_2 - \lambda)\\
&& -2s_1s_2t_1(r_2 - \lambda) -2s_1s_2t_2(r_1 - \lambda). \nonumber
\end{eqnarray}
\end{theorem}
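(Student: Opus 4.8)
The plan is to exploit the lower triangular, two-periodic band structure of $B-\lambda I$, reduce the boundedness of its inverse to a Floquet (transfer-matrix) analysis of the associated period-two linear difference equation, and finally invoke the interpolation Lemma~\ref{boundedlp} to pass from $l_1$ and $l_\infty$ to $l_p$. First I would observe that for $\lambda \notin \{r_1, r_2\}$ the diagonal entries $\ar$ and $\br$ are nonzero, so $B-\lambda I$ admits a formal lower triangular inverse computed by forward substitution. Writing $(B-\lambda I)x = y$ row by row, the components $x_n$ satisfy, for large $n$, the homogeneous recurrence $(\ar)x_n + s_2 x_{n-1} + t_1 x_{n-2} = 0$ when $n$ is odd and $(\br)x_n + s_1 x_{n-1} + t_2 x_{n-2} = 0$ when $n$ is even. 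Boundedness of the inverse on $l_p$ is therefore governed by the growth or decay of the solutions of this period-two system.

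Next I would form the monodromy matrix $M$ over one period as the product of the two one-step transfer matrices. A direct computation gives $\det M = t_1 t_2 / \big((\ar)(\br)\big)$ and $\operatorname{tr} M = \big(s_1 s_2 - t_1(\br) - t_2(\ar)\big)/\big((\ar)(\br)\big)$, so the Floquet multipliers $\mu_{\pm}$ are the roots of
\[
(\ar)(\br)\,\mu^2 - \big(s_1 s_2 - t_1(\br) - t_2(\ar)\big)\mu + t_1 t_2 = 0,
\]
whose discriminant is exactly the quantity $\chi$ of \eqref{chi}. Hence $\mu_{\pm} = \big(s_1 s_2 - t_1(\br) - t_2(\ar) \pm \sqrt{\chi}\big)/\big(2(\ar)(\br)\big)$, and the ratio appearing in the theorem is precisely $1/\mu_+$. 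The key equivalence to establish is then that $B-\lambda I$ has a bounded inverse on $l_p$ if and only if this ratio has modulus strictly greater than $1$, i.e.\ if and only if $|\mu_+| < 1$.

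For the forward direction, the Green's function $G_{nk}$ (the $(n,k)$ entry of the formal inverse) vanishes for $n<k$ and, for $n>k$, is a combination of the two Floquet modes $\mu_{\pm}^{(n-k)/2}$ fixed by the jump at $n=k$. When both $|\mu_+|<1$ and $|\mu_-|<1$ these entries decay geometrically away from the diagonal, uniformly in $k$, so the $l_1$-norms of the rows and of the columns of the inverse are uniformly bounded. Lemmas~\ref{boundedl1} and \ref{boundedl} then place the inverse in $(l_1,l_1)\cap(l_\infty,l_\infty)$, and Lemma~\ref{boundedlp} yields boundedness on $l_p$, so $\lambda \in \rho(B,l_p)$. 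Conversely, when $|\mu_+|\geq 1$ the dominant mode does not decay and the columns of the formal inverse fail to lie in $l_p$, so $B-\lambda I$ has no bounded inverse and $\lambda \in \sigma(B,l_p)$. Finally I would treat the excluded values $\lambda \in \{r_1, r_2\}$ separately: there a diagonal entry vanishes, $\chi$ reduces to a perfect square, and one checks directly that the theorem's inequality holds, placing these points in the spectrum.

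The main obstacle I anticipate is the last equivalence: controlling the branch of $\sqrt{\chi}$ so that $\mu_+$ is genuinely the multiplier governing decay, and in particular that $|\mu_+| < 1$ forces $|\mu_-| < 1$ (so that the single inequality on $1/\mu_+$ captures the region where both modes are subunital). This is exactly where the standing hypotheses $\sqrt{s_1^2} = s_1$, $\sqrt{s_2^2} = s_2$ together with the square-root convention of the Remark are needed, and where a careful case analysis — or a connectedness argument on the region $\{|\mu_+| < 1\}$ — is required to rule out the possibility that the subdominant mode alone dictates boundedness.
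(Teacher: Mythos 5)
Your proposal follows essentially the same route as the paper's proof: the paper's matrices $A_1$ and $A_2$ are exactly your period-two transfer matrices, their common characteristic equation \eqref{char1} is your Floquet quadratic with discriminant $\chi$ (so $\alpha_1,\alpha_2$ are your $\mu_\pm$), and the paper likewise concludes one direction via geometric decay of the inverse's entries together with Lemmas \ref{boundedl1}, \ref{boundedl} and \ref{boundedlp}, and the other direction by showing the first column of the formal inverse is not in $l_p$, with $\lambda\in\{r_1,r_2\}$ handled by failure of dense range. The obstacle you flag at the end is resolved in the paper by the single elementary inequality $\left|1-\sqrt{z}\right| \leq \left|1+\sqrt{z}\right|$ (valid under the stated square-root convention), which shows $|\alpha_1|<1$ forces $|\alpha_2|<1$, while the equal-root case $\chi=0$ is treated separately via the Jordan form.
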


\begin{proof}
Let $S = \left\lbrace \lambda \in \mathbb{C} : \left| \frac{2 (r_1 - \lambda) (r_2 - \lambda)}{s_1s_2 - t_1(r_2 - \lambda)- t_2(r_1 - \lambda) + \sqrt{\chi}} \right| \leq 1 \right\rbrace$ and let $\lambda \notin S.$ This implies that, $\lambda \notin \{r_1, r_2\}$ and $(B - \lambda I)$ has an inverse and it is of the form
\begin{equation} \label{eq_inverse}
   (B-\lambda I)^{-1}=
  \left( {\begin{array}{ccccc}
   a_1   & 0      & 0      & 0      & \cdots \\
   a_2   & b_1    & 0      & 0      & \cdots \\
   a_3   & b_2    & a_1    & 0      & \cdots \\
   a_4   & b_3    & a_2    & b_1    & \cdots \\
  \vdots & \vdots & \vdots & \vdots & \ddots 
  \end{array} } \right)
\end{equation}
where 
\begin{equation} \label{eq_a1}
\left.\begin{aligned}
t_1 a_{2k-1} + s_2 a_{2k} + (r_1 - \lambda)a_{2k+1} &= &0\\
t_2 a_{2k} + s_1 a_{2k+1} + (r_2 - \lambda)a_{2k+2} &= &0
 \end{aligned}\right\}
\end{equation}
and 
\begin{equation} \label{eq_b1}
\left.\begin{aligned}
t_2 b_{2k-1} + s_1 b_{2k} + (r_2 - \lambda)b_{2k+1} &= &0\\
t_1 b_{2k} + s_2 b_{2k+1} + (r_1 - \lambda)b_{2k+2} &= &0
 \end{aligned}\right\}
\end{equation}
for $k \in \mathbb{N}$ with 
\begin{equation*}
a_1 = \frac{1}{r_1 - \lambda} , \ \ \  a_2 = - \frac{s_1}{(r_1 - \lambda) (r_2 - \lambda)},
\end{equation*}
\begin{equation*}
b_1 = \frac{1}{r_2 - \lambda} , \ \ \  b_2 = - \frac{s_2}{(r_1 - \lambda) (r_2 - \lambda)}.
\end{equation*}
The equations in \eqref{eq_a1} imply
\begin{eqnarray*}
a_{2k+1} &=& -\frac{t_1}{r_1 - \lambda} a_{2k-1} - \frac{s_2}{r_1 - \lambda} a_{2k}\\\\
a_{2k+2} &=& \frac{s_1t_1}{(r_1 - \lambda)(r_2 - \lambda)}a_{2k-1} + \left( -\frac{t_2}{r_2 - \lambda} + \frac{s_1s_2}{(r_1 - \lambda)(r_2 - \lambda)}\right) a_{2k}.
\end{eqnarray*}
The above equations can be written as 
\begin{equation} \label{maineqn}
x_{k+1} = A_1 x_k, \ \ k \in \mathbb{N},
\end{equation}
where 
\begin{equation*} 
   A_1=
  \left( {\begin{array}{cc}
   -\frac{t_1}{r_1 - \lambda}  & - \frac{s_2}{r_1 - \lambda}  \\\\    
   \frac{s_1t_1}{(r_1 - \lambda)(r_2 - \lambda)}   & -\frac{t_2}{r_2 - \lambda} + \frac{s_1s_2}{(r_1 - \lambda)(r_2 - \lambda)}
  \end{array} } \right) , \ \ x_k=
  \left( {\begin{array}{c}
   a_{2k-1}   \\   
   a_{2k}  
  \end{array} } \right) \ \ \mbox{for} \ \ k\in \mathbb{N}.
\end{equation*}

In order to obtain the solution of the system of difference equations \eqref{maineqn}, we need to find the eigenvalues and eigenvectors of $A_1$. We consider two cases here. In case 1 we consider $A_1$ is diagonalisable and in case 2 we consider $A_1$ is not diagonalisable.

\textit{Case 1} : $A_1$ is diagonalisable .

To find the eigenvalues of $A_1,$ let $\det (A_1 - \alpha I) = 0$. Which gives the characteristic equation 
\begin{equation} \label{char1}
f(\alpha) = \alpha^2 + \left(\frac{t_1}{r_1 - \lambda} + \frac{t_2}{r_2 - \lambda} -  \frac{s_1s_2}{(r_1 - \lambda)(r_2 - \lambda)}\right) \alpha + \frac{t_1t_2}{(r_1 - \lambda)(r_2 - \lambda)} = 0.
\end{equation} 
The roots of the equation are
\begin{equation*}
\alpha_1 = \frac{s_1s_2 - t_1(r_2 - \lambda)- t_2(r_1 - \lambda) + \sqrt{\chi}}{2 (r_1 - \lambda)(r_2 - \lambda)}, \ \ \ \alpha_2 = \frac{s_1s_2 - t_1(r_2 - \lambda)- t_2(r_1 - \lambda) - \sqrt{\chi}}{2 (r_1 - \lambda)(r_2 - \lambda)}
\end{equation*}
where $\chi$ is mentioned in \eqref{chi}. Since $A_1$ is diagonalisable and not a scalar multiple of the identity matrix, so $\alpha_1 \neq \alpha_2$. This implies $\chi \neq 0.$ Let $f = 
  \left( {\begin{array}{c}
   f_1  \\   
   f_2
  \end{array} } \right)$ 
be the eigenvector corresponding to $\alpha_1.$ Then $(A_1 - \alpha_1 I)f = 0.$ This gives
\begin{equation*}
\left( {\begin{array}{cc}
   \frac{-s_1s_2 + t_2(\ar)-t_1(\br) - \sqrt{\chi}}{2(\ar)(\br)}  & - \frac{s_2}{\ar}  \\\\    
   \frac{s_1t_1}{(r_1 - \lambda)(r_2 - \lambda)}   & \frac{s_1s_2 - t_2(\ar)+t_1(\br) - \sqrt{\chi}}{2(\ar)(\br)}
  \end{array} } \right) f = \left( {\begin{array}{c}
  0 \\   
  0
  \end{array} } \right).
\end{equation*}
Then
\begin{equation*}
f=(f_1, \ f_2)^t = \left(1, \ \frac{-s_1s_2 + t_2(\ar)-t_1(\br) - \sqrt{\chi}}{2s_2(\br)}  \right)^t
\end{equation*}
is the eigenvector corresponding to $\alpha_1.$ A similar calculation shows that 
\begin{equation*}
g=(g_1, \ g_2)^t = \left(1, \ \frac{-s_1s_2 + t_2(\ar)-t_1(\br) + \sqrt{\chi}}{2s_2(\br)}  \right)^t
\end{equation*}
is the eigenvector corresponding to $\alpha_2.$
Hence the general solution of \eqref{maineqn} is given by (see \cite[p. 137]{ela})
\begin{equation} \label{dia_sol}
x_k = 
  \left( {\begin{array}{c}
   a_{2k-1}   \\   
   a_{2k}
  \end{array} } \right)
= c_1 \left( {\begin{array}{c}
   f_1  \\   
   f_2
  \end{array} } \right) \alpha_1^k + c_2 \left( {\begin{array}{c}
   g_1  \\   
   g_2
  \end{array} } \right) \alpha_2^k, \ \ k \in \mathbb{N},
\end{equation}
where $c_1,$ $c_2$ are constants which can be determined from the relation
\begin{equation*}
x_1 = c_1 \left( {\begin{array}{c}
   f_1  \\   
   f_2
  \end{array} } \right) \alpha_1 + c_2 \left( {\begin{array}{c}
   g_1  \\   
   g_2
  \end{array} } \right) \alpha_2 = 
\left( {\begin{array}{c}
   a_1  \\   
   a_2
  \end{array} } \right) = \left( {\begin{array}{c}
   \frac{1}{r_1 - \lambda}   \\\\   
   - \frac{s_1}{(r_1 - \lambda) (r_2 - \lambda)}  
  \end{array} } \right).
\end{equation*}
By solving the above system of equations we obtain, if $t_1 \neq 0,$ $t_2 \neq 0$ then $\alpha_1 \neq 0,$ $\alpha_2 \neq 0$ and
\begin{equation*}
c_1= \frac{s_1g_1 + (\br)g_2}{\alpha_1(\ar)(\br)(f_1g_2 - g_1f_2)}, \ \ c_2= - \frac{s_1f_1 + (\br)f_2}{\alpha_2(\ar)(\br)(f_1g_2 - g_1f_2)}.
\end{equation*}
Also if $t_1 = t_2 = 0$ then $\alpha_1 \neq 0,$ $\alpha_2 = 0$ and $c_1 = \frac{\br}{s_1s_2}.$ It can be easily verified that, $c_1 \neq 0$ for both the cases $t_1 \neq 0,$ $t_2 \neq 0$ and $t_1 = t_2 = 0.$

\noindent Since $\lambda \notin S$ so $|\alpha_1| <1.$ Now we show that $|\alpha_2|<1.$ If $s_1s_2 - t_1(r_2 - \lambda)- t_2(r_1 - \lambda) = 0$ then, $|\alpha_2| = |\alpha_1|<1.$ Let $s_1s_2 - t_1(r_2 - \lambda)- t_2(r_1 - \lambda) \neq 0.$ Since $|\alpha_1| <1,$ we have
\begin{equation*}
\left|1+ \sqrt{\frac{\chi}{(s_1s_2 - t_1(r_2 - \lambda)- t_2(r_1 - \lambda))^2}}  \right| < \left|\frac{2 (r_1 - \lambda) (r_2 - \lambda)}{s_1s_2 - t_1(r_2 - \lambda)- t_2(r_1 - \lambda)}  \right|.
\end{equation*}
Since $|1- \sqrt{z}| \leq |1 + \sqrt{z}|$ for all $z \in \mathbb{C},$ we have

\begin{equation*}
\left|1- \sqrt{\frac{\chi}{(s_1s_2 - t_1(r_2 - \lambda)- t_2(r_1 - \lambda))^2}}  \right| < \left|\frac{2 (r_1 - \lambda) (r_2 - \lambda)}{s_1s_2 - t_1(r_2 - \lambda)- t_2(r_1 - \lambda)}  \right|.
\end{equation*}
This proves that $|\alpha_2|<1.$ Hence from \eqref{dia_sol} it follows that, $(a_k) \in l_1.$

\textit{Case 2:} $A_1$ is not diagonalisable.

In this case we have
 \[\alpha_1 = \alpha_2 = \alpha =  \frac{s_1s_2 - t_1(r_2 - \lambda)- t_2(r_1 - \lambda)}{2(r_1 - \lambda)(r_2 - \lambda)} \ \ \mbox{and} \ \ \chi=0.\]
 Also if $t_1 = t_2 = 0$ then from \eqref{char1} it follows that one eigenvalue is zero and other is non-zero which is not possible in this case. Therefore $t_1 \neq 0$ and $t_2 \neq 0.$

 \noindent Let $h= \left( {\begin{array}{c}
   h_1   \\   
   h_2 
  \end{array} } \right)$ be the corresponding eigenvector of $\alpha.$ Then $(A_1 - \alpha I)h=0$ gives
  \begin{equation*}
\left( {\begin{array}{cc}
   \frac{-s_1s_2 + t_2(\ar)-t_1(\br)}{2(\ar)(\br)}  & - \frac{s_2}{\ar}  \\\\    
   \frac{s_1t_1}{(r_1 - \lambda)(r_2 - \lambda)}   & \frac{s_1s_2 - t_2(\ar)+t_1(\br)}{2(\ar)(\br)}
  \end{array} } \right) h = \left( {\begin{array}{c}
  0 \\   
  0
  \end{array} } \right).
\end{equation*}
Then $$h=(h_1 , \ h_2)^t = \left(1, \ \frac{-s_1s_2 + t_2(\ar)-t_1(\br)}{2s_2(\br)}  \right)^t$$ is the eigenvector corresponding to $\alpha.$ Let $u = (u_1, \ u_2)^t$ be the generalised eigenvector of $\alpha.$ Then
\begin{equation*}
(A_1 - \alpha I)u = h.
\end{equation*}
The Jordan form of $A_1$ is given by
  \begin{equation*}
  P^{-1}A_1P = J
  \end{equation*}
where
\begin{equation*} 
   J=
  \left( {\begin{array}{cc}
   \alpha  & 1  \\   
   0  & \alpha
  \end{array} } \right) \ \ \mbox{and} \ \
  P=
  \left( {\begin{array}{cc}
   h_1  & u_1  \\   
   h_2  & u_2
  \end{array} } \right).
\end{equation*}
Hence the general solution of \eqref{eq_a1} can be written as (see \cite[p. 145]{ela})
\begin{equation*}
x_k  = PJ^k \tilde{c}, \ k \in \mathbb{N}.
\end{equation*}
Where $\tilde{c} = \left( {\begin{array}{c}
   \tilde{c_1}   \\   
   \tilde{c_2} 
  \end{array} } \right) \ $  \ and \ $J^k = \left( {\begin{array}{cc}
   \alpha^k  & k \alpha^{k-1}  \\   
   0  & \alpha^k
  \end{array} } \right).$ Therefore the solution $x_k$ is of the form
  \begin{equation} \label{nondia_sol}
  x_k = 
  \left( {\begin{array}{c}
   a_{2k-1}   \\   
   a_{2k}
  \end{array} } \right)
  =
  \left( {\begin{array}{c}
   \tilde{c_1} h_1 \alpha^k + \tilde{c_2}(h_1 k \alpha^{k-1} + u_1 \alpha^k)   \\   
   \tilde{c_1} h_2 \alpha^k + \tilde{c_2}(h_2 k \alpha^{k-1} + u_2 \alpha^k)
  \end{array} } \right)
  \end{equation}
with 
\begin{equation*}
x_1  = \left( {\begin{array}{c}
   a_{1}   \\   
   a_{2}
  \end{array} } \right)
=  \left( {\begin{array}{c}
   \tilde{c_1} h_1 \alpha + \tilde{c_2}(h_1  + u_1 \alpha)   \\   
   \tilde{c_1} h_2 \alpha + \tilde{c_2}(h_2  + u_2 \alpha)
  \end{array} } \right) = 
\left( {\begin{array}{c}
   \frac{1}{r_1 - \lambda}   \\\\  
   - \frac{s_1}{(r_1 - \lambda) (r_2 - \lambda)}  
  \end{array} } \right).
\end{equation*}
Since $t_1 \neq 0,$ $t_2 \neq 0$ then $\alpha \neq 0$ and
\begin{equation*}
\tilde{c_1}= \frac{s_1(h_1 + u_1 \alpha) + (\br)(h_2 + u_2 \alpha)}{\alpha^2(\ar)(\br)(h_1u_2 - u_1h_2)}, \ \ \tilde{c_2}= - \frac{s_1h_1 + (\br)h_2}{\alpha(\ar)(\br)(h_1u_2 - u_1h_2)}.
\end{equation*}
Since $|\alpha| = \left| \frac{s_1s_2 - t_1(r_2 - \lambda)- t_2(r_1 - \lambda)}{2(r_1 - \lambda)(r_2 - \lambda)} \right| < 1,$ from \eqref{nondia_sol}, we have $(a_k) \in l_1.$

Now from the equations in \eqref{eq_b1} we have for $k\in \mathbb{N} $
\begin{eqnarray*}
b_{2k+1} &=& -\frac{t_2}{r_2 - \lambda} b_{2k-1} - \frac{s_1}{r_2 - \lambda} b_{2k}\\
b_{2k+2} &=& \frac{s_2t_2}{(r_1 - \lambda)(r_2 - \lambda)}b_{2k-1} + \left( -\frac{t_1}{r_1 - \lambda} + \frac{s_1s_2}{(r_1 - \lambda)(r_2 - \lambda)}\right) b_{2k}.
\end{eqnarray*}
The above equations can be written as 
\begin{equation}
y_{k+1} = A_2 y_k, \ \ k \in \mathbb{N},
\end{equation}
where 
\begin{equation*} 
   A_2=
  \left( {\begin{array}{cc}
   -\frac{t_2}{r_2 - \lambda}  & - \frac{s_1}{r_2 - \lambda}  \vspace{0.4cm}  \\     
   \frac{s_2t_2}{(r_1 - \lambda)(r_2 - \lambda)}  & -\frac{t_1}{r_1 - \lambda} + \frac{s_1s_2}{(r_1 - \lambda)(r_2 - \lambda)}
  \end{array} } \right) , \ \ \ 
  y_k= \left( {\begin{array}{c}
   b_{2k-1}\\   
   b_{2k}  
  \end{array} } \right), \ \ k \in \mathbb{N}.
\end{equation*}
The characteristic equation of $A_2$ is $\det(A_2 - \alpha I) = 0.$ Which gives
\begin{equation*} 
\alpha^2 + \left(\frac{t_1}{r_1 - \lambda} + \frac{t_2}{r_2 - \lambda} -  \frac{s_1t_1}{(r_1 - \lambda)(r_2 - \lambda)}\right) \alpha + \frac{t_1t_2}{(r_1 - \lambda)(r_2 - \lambda)} = 0.
\end{equation*}
Since the characteristic equation of the matrices $A_1$ and $A_2$ are same so the eigenvalues of $A_2$ are $\alpha_1$ and $\alpha_2$. Then by similar method as above it can be shown that $(b_k) \in l_1.$ This implies $(B - \lambda I)^{-1} \in (l_1, l_1).$ Also since $(a_k) \in l_1$ and $(b_k) \in l_1$, the supremum of the $l_1$ norms of the rows of $(B - \lambda I)^{-1}$ is finite. This proves that $(B - \lambda I)^{-1} \in (l_\infty , l_\infty).$ Therefore $(B - \lambda I)^{-1} \in (l_p, l_p)$. Hence $\lambda \notin \sigma(B, l_p)$ and this shows that $\sigma(B, l_p) \subseteq S.$

Conversely, let $\lambda \in S.$ If $\lambda = r_1$ or $\lambda = r_2$ then $B - \lambda I$ does not have dense range, so it is not invertible. Hence let $\lambda \notin \{r_1, r_2\}.$ Then
\begin{equation*}
   (B-\lambda I)^{-1}=
  \left( {\begin{array}{ccccc}
   a_1   & 0      & 0      & 0      & \cdots \\
   a_2   & b_1    & 0      & 0      & \cdots \\
   a_3   & b_2    & a_1    & 0      & \cdots \\
   a_4   & b_3    & a_2    & b_1    & \cdots \\
  \vdots & \vdots & \vdots & \vdots & \ddots 
  \end{array} } \right)
\end{equation*}
where $(a_k)$ and $(b_k)$ are given in \eqref{eq_a1} and \eqref{eq_b1} respectively. If $\chi = 0$ then $\left|\frac{s_1s_2 - t_1(r_2 - \lambda)- t_2(r_1 - \lambda)}{2(r_1 - \lambda I)(r_2 - \lambda I)}  \right| \geq 1.$ Therefore from \eqref{nondia_sol} it follows that $(a_k) \notin l_p.$ Also if $\chi \neq 0$ then from \eqref{dia_sol} we have $|\alpha_1| \geq 1$ and $c_1 \neq 0.$ This also implies that $(a_k) \notin l_p.$ Now $x = (1, 0,0 ,\cdots) \in l_p$ but $(B - \lambda I)^{-1}x = (a_1, a_2, \cdots) \notin l_p.$ This shows that $(B - \lambda I)^{-1} \notin B(l_p).$ Hence $S \subseteq \sigma(B, l_p).$ 
\end{proof}

\begin{theorem} \label{pspectrum_l1}
The point spectrum of $B$ on $l_p$ is given by
\begin{equation*}
\sigma_p(B, l_p) = \emptyset.
\end{equation*}
\end{theorem}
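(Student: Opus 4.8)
The plan is to prove the stronger, purely algebraic statement that the homogeneous system $(B-\lambda I)x=0$ has only the trivial solution for every $\lambda\in\mathbb{C}$; emptiness of $\sigma_p(B,l_p)$ then follows at once, with no use of the $l_p$ norm. Writing $x=(x_k)$ with the convention $x_{-1}=x_0=0$, the $n$-th equation of $(B-\lambda I)x=0$ has the form $t\,x_{n-2}+s\,x_{n-1}+(r-\lambda)\,x_n=0$, where the three coefficients alternate with the parity of $n$; in particular the diagonal coefficient is $\ar$ on odd rows and $\br$ on even rows.

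First I would treat the generic case $\lambda\notin\{r_1,r_2\}$. Row $1$ gives $\ar\,x_1=0$, hence $x_1=0$; row $2$ then gives $\br\,x_2=0$, hence $x_2=0$; and since for $n\geq 3$ the entry $x_n$ enters its own row through the nonzero diagonal coefficient $\ar$ or $\br$, an induction forces $x_n=0$ for all $n$. Thus no such $\lambda$ is an eigenvalue, and only $\lambda=r_1$ and $\lambda=r_2$ remain to be excluded.

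The heart of the argument is the case $\lambda=r_1$ (the case $\lambda=r_2$ being handled by an analogous computation, with the roles of the odd and even rows interchanged). If $r_1=r_2$ it is immediate: taking $k$ to be the least index with $x_k\neq 0$, every diagonal coefficient equals $0$, so row $k+1$ collapses to $s\,x_k=0$ and forces $x_k=0$, a contradiction. If $r_1\neq r_2$ the obstacle is that the diagonal coefficient vanishes on every odd row, so the odd rows are constraints rather than recursions and forward substitution no longer closes up. To deal with this I would set $u_m=x_{2m-1}$ and $v_m=x_{2m}$ and extract two families of relations: odd row $2m+1$ gives $t_1u_m+s_2v_m=0$, i.e. $v_m=-(t_1/s_2)u_m$, while even row $2m$ gives $t_2v_{m-1}+s_1u_m+\br\,v_m=0$. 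Eliminating the $v_m$ leaves the scalar first-order recurrence
\begin{equation*}
(s_1s_2-t_1\br)\,u_m = t_1t_2\,u_{m-1},\qquad m\geq 2,
\end{equation*}
together with the initial relation $(s_1s_2-t_1\br)\,u_1=0$ read off from row $2$.

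Finally I would conclude by splitting on the leading factor $s_1s_2-t_1\br$ (evaluated at $\lambda=r_1$). If it is nonzero, the initial relation gives $u_1=0$ and the recurrence propagates $u_m=0$ for all $m$; if it vanishes, the recurrence reduces to $0=t_1t_2\,u_{m-1}$, and since $t_1t_2\neq 0$ in the genuine triple-band case this again yields $u_m\equiv 0$ (the bidiagonal case $t_1=t_2=0$ falls under the nonzero factor, as then $s_1s_2\neq 0$). In every case $u_m=0$, whence $v_m=-(t_1/s_2)u_m=0$ and $x=0$. I expect the degenerate boundary analysis at $\lambda\in\{r_1,r_2\}$ to be the only real obstacle: one must keep the alternating coefficients $s_1,s_2,t_1,t_2$ straight while pairing each constraint (odd) row with the neighbouring recursion (even) row to produce the scalar recurrence, and then isolate the sub-case in which its leading factor degenerates.
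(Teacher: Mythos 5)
Your proposal is correct and follows essentially the same route as the paper's proof: reduce to the critical values $\lambda\in\{r_1,r_2\}$, dispose of $r_1=r_2$ immediately, and for $\lambda=r_1\neq r_2$ exploit the two-term odd rows $t_1x_{2m-1}+s_2x_{2m}=0$ and split on whether $s_1s_2-t_1(r_2-r_1)$ vanishes. The only difference is packaging: the paper treats consecutive pairs $(x_{2m-1},x_{2m})$ as $2\times 2$ systems and, in the degenerate subcase, derives a contradiction at the first nonzero pair (with a separate sub-case for $t_1=t_2=0$), whereas you eliminate the even-indexed entries to obtain one scalar first-order recurrence whose leading factor is that same determinant---identical algebra, slightly tidier bookkeeping.
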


\begin{proof}
Suppose $(B- \lambda I) x = 0$ for $x \neq \theta$ in $l_p.$ This gives the following equations
\begin{eqnarray*}
r_1 x_1 &=& \lambda x_1 \\
s_1 x_1 + r_2 x_2 &=& \lambda x_2 \\
t_1 x_1 + s_2 x_2 + r_1 x_3 &=& \lambda x_3 \\
t_2 x_2 + s_1 x_3 + r_2 x_4 &=& \lambda x_4\\
    & \vdots &
\end{eqnarray*}
From the above equation it follows that, if $\lambda \notin \{r_1, r_2\}$ then $x_k = 0$ for all $k \in \mathbb{N}$. This implies $\sigma_p(B, l_p) \subseteq \{r_1, r_2\}.$  Now we consider two cases here.\\
\textit{Case 1}: $r_1 = r_2 = r.$

Consider $(B- r I) x = 0$ for $x \neq \theta$. Then we have the following equations
\begin{eqnarray*}
r x_1 &=& r x_1 \\
s_1 x_1 + r x_2 &=& r x_2 \\
t_1 x_1 + s_2 x_2 + r x_3 &=& r x_3 \\
t_2 x_2 + s_1 x_3 + r x_4 &=& r x_4\\
    & \vdots &
\end{eqnarray*}
Since $s_1, s_2$ are non-zero complex numbers, from above equations it follows that $x_k = 0$ for $k \in \mathbb{N}.$ Hence $\sigma_p(B, l_p) = \emptyset$ in this case.

\textit{Case 2}: $r_1 \neq r_2.$

Consider $(B- r_1 I) x = 0$ for $x \neq \theta$. Then we have the following equations
\begin{eqnarray} \label{eigen_r1}
\left.\begin{aligned}
r_1 x_1 &=& r_1 x_1 \\
s_1 x_1 + r_2 x_2 &=& r_1 x_2 \\
t_1 x_1 + s_2 x_2 + r_1 x_3 &=& r_1 x_3 \\
t_2 x_2 + s_1 x_3 + r_2 x_4 &=& r_1 x_4\\
t_1 x_3 + s_2 x_4 + r_1 x_5 &=& r_1 x_5 \\
t_2 x_4 + s_1 x_5 + r_2 x_6 &=& r_1 x_6\\
    & \vdots &
 \end{aligned}\right\}
\end{eqnarray}
Here we consider two subcases.

\textit{Subcase 1}: $t_1 \neq 0,$ $t_2 \neq 0.$

Let $s_1s_2-t_1(r_2 - r_1)\neq 0.$ Then second and third equation of \eqref{eigen_r1} give
\begin{eqnarray} \label{x1x2}
\left.\begin{aligned}
s_1 x_1 + (r_2 - r_1) x_2 &=& 0 \\
t_1 x_1 + s_2 x_2  &=& 0
\end{aligned}\right\}.
\end{eqnarray}
This implies $x_1 = 0$ and $x_2 = 0.$ From fourth and fifth equation of \eqref{eigen_r1} we have
\begin{eqnarray} \label{x3x4}
\left.\begin{aligned}
s_1 x_3 + (r_2 - r_1) x_4 &=& 0 \\
t_1 x_3 + s_2 x_4  &=& 0
\end{aligned}\right\}.
\end{eqnarray}
Then $x_3 = 0$ and $x_4 = 0.$  Proceeding in this way we have $x_k = 0$ for all $k \in \mathbb{N}.$ Which is a contradiction.

Now let  $s_1s_2 - t_1 (r_2 - r_1) = 0$ where $s_1,$ $s_2,$ $t_1$  are non-zero and $r_1 \neq r_2.$  
Then from \eqref{x1x2} we have either $(x_1, x_2) = (0, 0)$ or $x_1 \neq 0,$ $x_2 \neq 0.$ If $(x_1, x_2) = (0, 0)$ then from from \eqref{x3x4} we have either $(x_3, x_4) = (0, 0)$ or $x_3 \neq 0,$ $x_4 \neq 0.$ In this way if we consider every pair $(x_1, x_2),(x_3, x_4), \cdots$ equals to $(0, 0)$ then $x_k = 0$ for all $k \in \mathbb{N}.$ Hence let $(x_p, x_{p+1})$ be the first pair such that $x_p \neq 0$ and $x_{p+1} \neq 0$ for some odd $p \in \mathbb{N}$. This implies
\begin{equation*}
(x_1, x_2) = (x_3, x_4) = \cdots = (x_{p-2}, x_{p-1}) = (0, 0).
\end{equation*}
Then the equations in \eqref{eigen_r1} reduce to 
\begin{eqnarray} \label{xp}
\left.\begin{aligned}
s_1 x_p + (r_2 - r_1)x_{p+1} &=& 0 \\
t_1 x_p + s_2 x_{p+1} &=& 0 \\
t_2 x_{p+1} + s_1 x_{p+2} + (r_2 - r_1) x_{p+3} &=& 0\\
t_1 x_{p+2} + s_2 x_{p+3}  &=& 0 \\
    & \vdots &
 \end{aligned}\right\}
\end{eqnarray}
From 4th equation of \eqref{xp} we have $x_{p+3}= - \frac{t_1}{s_2}x_{p+2}$. Substituting it to 3rd equation of \eqref{xp} we get $t_2s_2 x_{p+1} + (s_1s_2 - t_1(r_2 - r_1))x_{p+2} = 0$ and this implies $t_2s_2x_{p+1} = 0$ which is not possible. Therefore $r_1 \notin \sigma_p(B, l_p).$

\textit{Sub-case 2}: $(t_1, t_2) = (0, 0).$

Here the system of equations \eqref{eigen_r1} reduces to
\begin{eqnarray*}
s_1 x_{2k-1} + (r_2 - r_1) x_{2k} &=& 0\\
s_2 x_{2k} &=& 0,
\end{eqnarray*}
where $k \in \mathbb{N}.$ This implies $x_{2k} = 0$ and consequently $x_{2k-1} = 0$ for all $k \in \mathbb{N}.$ Therefore $r_1 \notin \sigma_p(B, l_p).$

\noindent Similarly as above it can be proved that $r_2 \notin \sigma_p(B ,l_p)$ and this proves the theorem.
\end{proof}

\begin{theorem} \label{pointlp*}
The point spectrum of $B^*$ over $l_p^*$ is given by
\begin{equation*}
\sigma_p(B^*, l_p^*)  = \left\lbrace \lambda \in \mathbb{C} : \left| \frac{2 (r_1 - \lambda) (r_2 - \lambda)}{s_1s_2 - t_1(r_2 - \lambda)- t_2(r_1 - \lambda) + \sqrt{\chi}} \right| < 1 \right\rbrace,
\end{equation*}
where $\chi$ is mentioned in \eqref{chi}.
\end{theorem}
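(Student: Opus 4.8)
The plan is to exploit the fact that $B^*$ acts on $l_p^* = l_q$ (with $\tfrac1p + \tfrac1q = 1$) as the transpose of $B$, i.e.\ as the upper-triangular triple-band matrix whose $n$-th row carries $r$ on the diagonal, $s$ on the first super-diagonal and $t$ on the second, with the same period-two pattern ($r_1,s_1,t_1$ for $n$ odd and $r_2,s_2,t_2$ for $n$ even). Writing $(B^* - \lambda I)x = 0$ for a would-be eigenvector $x = (x_k) \in l_q \setminus\{0\}$ then produces the two interlaced relations
\begin{align*}
(\ar)x_{2k-1} + s_1 x_{2k} + t_1 x_{2k+1} &= 0,\\
(\br)x_{2k} + s_2 x_{2k+1} + t_2 x_{2k+2} &= 0,
\end{align*}
for $k \in \mathbb{N}$. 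Unlike the lower-triangular equation treated in Theorem \ref{pspectrum_l1}, this is a \emph{forward} recurrence: the pair $(x_1,x_2)$ is free and determines the whole sequence, so the solution space is two-dimensional and the question becomes purely one of $l_q$-decay.

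First I would assume $t_1 \neq 0$, $t_2 \neq 0$ and eliminate $x_{2k+1}$ to cast the system as a first-order vector recurrence $z_{k+1} = M z_k$, where $z_k = (x_{2k-1}, x_{2k})^t$ and $M$ is the explicit $2\times 2$ matrix obtained above. A short computation gives $\det M = (\ar)(\br)/(t_1 t_2)$ and $\operatorname{tr} M = (s_1 s_2 - t_1(\br) - t_2(\ar))/(t_1 t_2)$; comparing with the characteristic equation \eqref{char1} of $A_1$ shows that the eigenvalues of $M$ are exactly the reciprocals $1/\alpha_1,\,1/\alpha_2$ of the roots $\alpha_1,\alpha_2$ appearing in Theorem \ref{spectrum_l1}. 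This reciprocal relationship is the structural heart of the argument and explains why the open disc condition reappears here in inverted form.

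Next I would characterise $l_q$-membership. Since $z_k = M^{k-1}z_1$, a nonzero solution lies in $l_q$ precisely when $z_1$ lies in the stable subspace of $M$, and this subspace is nontrivial exactly when $M$ has an eigenvalue of modulus strictly less than one; equivalently $\min\{|1/\alpha_1|,|1/\alpha_2|\} < 1$, that is $\max\{|\alpha_1|,|\alpha_2|\} > 1$. The non-diagonalisable case $\chi = 0$ is handled in the same way, since the extra factor $k\alpha^{-k}$ occurring in solutions of type \eqref{nondia_sol} still tends to $0$ iff the eigenvalue has modulus less than one, and is unbounded iff the modulus is at least one. It then remains to convert $\max\{|\alpha_1|,|\alpha_2|\} > 1$ into the single inequality $\bigl| 2(\ar)(\br)/(s_1 s_2 - t_1(\br) - t_2(\ar) + \sqrt{\chi})\bigr| = |1/\alpha_1| < 1$ stated in the theorem; this is done by showing $|\alpha_1| \geq |\alpha_2|$, which follows from the square-root convention $\operatorname{Re}\sqrt{\chi} \geq 0$ together with the inequality $|1 - \sqrt z| \leq |1 + \sqrt z|$, exactly as in the proof of Theorem \ref{spectrum_l1}.

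Finally I would dispose of the remaining cases. When $t_1 = t_2 = 0$ the system collapses to a scalar period-two recurrence with ratio $(\ar)(\br)/(s_1 s_2) = 1/\alpha_1$, so membership in $l_q$ is again governed by $|1/\alpha_1| < 1$; and for $\lambda \in \{r_1, r_2\}$ one exhibits explicit finitely supported eigenvectors of $B^*$ (for instance $e^{(1)}$ for $\lambda = r_1$), which lie in $l_q$ and in the claimed set, since there $|1/\alpha_1| = 0$. The step I expect to be the main obstacle is the boundary bookkeeping: establishing that the inequality is strict (so that $|\alpha_1| = 1$ is correctly excluded, unlike in the closed-disc description of $\sigma(B,l_p)$) and that the chosen branch of $\sqrt{\chi}$ consistently makes $\alpha_1$ the root of larger modulus. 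This is precisely the delicate point already underlying Theorem \ref{spectrum_l1}, and carrying it through in the present open-disc setting is where the care is required.
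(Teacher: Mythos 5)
Your proposal is correct in substance and, for the forward direction, is exactly the paper's argument: pass to the transposed (forward) recurrence, eliminate to get a $2\times 2$ system whose characteristic polynomial $g$ is the reciprocal of \eqref{char1}, observe that its eigenvalues are $1/\alpha_1, 1/\alpha_2$, and produce a geometrically decaying eigenvector when $|1/\alpha_1|<1$; your treatment of $t_1=t_2=0$ and of $\lambda\in\{r_1,r_2\}$ also coincides with the paper's. Where you genuinely diverge is the converse inclusion. You run a direct stable-subspace dichotomy for \emph{every} $\lambda\notin\{r_1,r_2\}$, which forces you to prove the root ordering $|\alpha_1|\geq|\alpha_2|$ throughout the complement of the open set in order to convert $\max\{|\alpha_1|,|\alpha_2|\}>1$ into the stated inequality. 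The paper instead sandwiches: having shown $S_1\subseteq\sigma_p(B^*,l_p^*)$, it invokes Proposition \ref{proposition}(a) and Theorem \ref{spectrum_l1} to get $\sigma_p(B^*,l_p^*)\subseteq\sigma(B^*,l_p^*)=\sigma(B,l_p)=S_1\cup S_2$, so only the boundary points $\lambda\in S_2$ (where $|\beta_1|=1$) need to be excluded, and there it argues $|\beta_2|\geq 1$, hence $x_k\nrightarrow 0$ for every nontrivial choice of constants. Your route buys self-containedness (no appeal to the spectrum theorem or the adjoint identity); the paper's route buys a much smaller set of $\lambda$'s on which the delicate modulus comparison must be made. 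One caution: the step you defer to --- deducing $|\alpha_1|\geq|\alpha_2|$ from the convention $\operatorname{Re}\sqrt{\chi}\geq 0$ and $|1-\sqrt{z}|\leq|1+\sqrt{z}|$ --- is weaker than it looks, because with $N=s_1s_2-t_1(r_2-\lambda)-t_2(r_1-\lambda)$ the principal branch $\sqrt{\chi/N^2}$ need not equal $\sqrt{\chi}/N$, so that inequality only orders the unordered pair $\{|N+\sqrt{\chi}|,|N-\sqrt{\chi}|\}$ and does not by itself fix which root is $\alpha_1$. Since the paper's own proofs (of Theorem \ref{spectrum_l1} and of this theorem on $S_2$) make the identical move, this is a shared delicacy rather than a defect peculiar to your argument --- and you correctly identified it as the main obstacle --- but be aware that your version leans on it more heavily, needing it on an open region rather than only where $|\alpha_1|=1$.
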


\begin{proof} 
 Let $S_1 = \left\lbrace \lambda \in \mathbb{C} : \left| \frac{2 (r_1 - \lambda) (r_2 - \lambda)}{s_1s_2 - t_1(r_2 - \lambda)- t_2(r_1 - \lambda) + \sqrt{\chi}} \right| < 1 \right\rbrace$ and $\lambda \in S_1$. Consider $(B^* - \lambda I)x = 0$ where $x \neq \theta.$ This implies the following equations
\begin{eqnarray} \label{eigen_*}
\left.\begin{aligned}
r_1 x_1 + s_1 x_2 + t_1 x_3 &=& \lambda x_1 \\
r_2 x_2 + s_2 x_3 + t_2 x_4 &=& \lambda x_2\\
r_1 x_3 + s_1 x_4 + t_1 x_5 &=& \lambda x_3 \\
& \vdots &
\end{aligned}\right\}
\end{eqnarray}
If $\lambda = r_1$ then $(1, 0, 0, \cdots)$ is an eigenvector corresponding to the eigenvalue $r_1$. This implies $r_1 \in \sigma_p(B^*, l_p^*).$ If $\lambda = r_2$ then $(1, - \frac{r_1 - r_2}{s_1}, 0, 0, \cdots)$ is an eigenvector corresponding to the eigenvalue $r_2$. This implies $r_2 \in \sigma_p(B^*, l_p^*).$ Assume $\lambda \notin \{r_1, r_2\}.$ Then \eqref{eigen_*} can be written as 
\begin{eqnarray} \label{eigen_*1}
\left.\begin{aligned}
(r_1 - \lambda) x_{2k -1} + s_1 x_{2k} + t_1 x_{2k+1} &=& 0\\
(r_2 - \lambda) x_{2k } + s_2 x_{2k+1} + t_2 x_{2k+2} &=& 0
\end{aligned}\right\}
\end{eqnarray}
for $k = 1,2,3, \cdots.$ We prove the theorem in two cases.

\textit{Case 1}: $t_1 \neq 0,$ $t_2 \neq 0.$

Equations in \eqref{eigen_*1} implies
\begin{eqnarray*}
x_{2k+1} &=& -\frac{(r_1 - \lambda)}{t_1} x_{2k -1} - \frac{s_1}{t_1} x_{2k}\\
x_{2k+2} &=& \frac{s_2 (r_1 - \lambda)}{t_1 t_2} x_{2k -1} + \left(- \frac{(r_2 - \lambda) }{t_2} + \frac{s_1 s_2}{t_1 t_2}  \right) x_{2k}.
\end{eqnarray*}
Therefore we have the following system of difference equation
\begin{equation*}
X_{k+1} = C X_k, \ \ k=1,2, \cdots,
\end{equation*}
where 
\begin{equation*} 
   C=
  \left( {\begin{array}{cc}
   -\frac{(r_1 - \lambda)}{t_1}  & - \frac{s_1}{t_1}  \\\\    
   \frac{s_2 (r_1 - \lambda)}{t_1 t_2}   & - \frac{(r_2 - \lambda) }{t_2} + \frac{s_1 s_2}{t_1 t_2} 
  \end{array} } \right) , \ \ X_k=
  \left( {\begin{array}{c}
   x_{2k-1}   \\   
   x_{2k}  
  \end{array} } \right) \ \ \mbox{for} \ \ k=1,2, \cdots.
\end{equation*}
The characteristic equation of the matrix $C$ is
\begin{equation*} 
g(\alpha)= \alpha^2 + \left(\frac{r_1 - \lambda}{t_1} + \frac{r_2 - \lambda}{t_2} -  \frac{s_1s_2}{t_1 t_2}\right) \alpha + \frac{(r_1 - \lambda)(r_2 - \lambda)}{t_1t_2} = 0.
\end{equation*}
It can be easily verified that $g(\alpha) = 0$ is the reciprocal equation of $f(\alpha)=0$ in \eqref{char1} and since $t_1 \neq 0,$ $t_2 \neq 0$ and $\lambda \notin \{r_1, r_2\}$ then the roots of both the equations $f(\alpha) = 0$ and $g(\alpha) = 0$ are non zero. Then 
\begin{equation*}
\alpha = \frac{2 (r_1 - \lambda) (r_2 - \lambda)}{s_1s_2 - t_1(r_2 - \lambda)- t_2(r_1 - \lambda) + \sqrt{\chi}}
\end{equation*} 
is an eigenvalue of $C$ and let $f = (f_1, f_2)^t$ be the corresponding eigenvector. Then
\begin{equation*}
  X_k = 
  \left( {\begin{array}{c}
   x_{2k-1}   \\   
   x_{2k}
  \end{array} } \right)
  =
  \left( {\begin{array}{c}
   f_1  \\   
   f_2
  \end{array} } \right) \alpha^k , \ \ k= 1,2, \cdots
  \end{equation*}
is a solution of \eqref{eigen_*1}. Since $\lambda \in S_1$ so $|\alpha| < 1$ and this implies $(x_k) \in l_p^* \cong l_{q}$  where $\frac{1}{p} + \frac{1}{q} = 1.$
This proves that $\lambda \in \sigma_p(B^*, l_p^*).$ Hence 
\begin{equation} \label{l1}
S_1 \subseteq  \sigma_p(B^*, l_p^*).
\end{equation}
 Also $S_1 \subseteq \sigma_p(B^*, l_p^*) \subseteq \sigma(B^*, l_p^*) = \sigma(B, l_p) = S_1 \cup S_2$ where
\begin{equation*}
S_2 =  \left\lbrace \lambda \in \mathbb{C} : \left| \frac{2 (r_1 - \lambda) (r_2 - \lambda)}{s_1s_2 - t_1(r_2 - \lambda)- t_2(r_1 - \lambda) + \sqrt{\chi}} \right| = 1 \right\rbrace.
\end{equation*}
Let $\lambda \in S_2.$ Then the eigenvalues of $C$ are 
\begin{equation*}
\beta_1 = \frac{s_1s_2 - t_1(r_2 - \lambda)- t_2(r_1 - \lambda) - \sqrt{\chi}}{2 t_1t_2}, \ \ \
\beta_2 = \frac{s_1s_2 - t_1(r_2 - \lambda)- t_2(r_1 - \lambda) + \sqrt{\chi}}{2 t_1t_2}
\end{equation*}
where $\beta_1 = \frac{1}{\alpha_1},$ $\beta_2 = \frac{1}{\alpha_2}.$ Let $C$ is diagonalisable. Then the general solution of \eqref{eigen_*1} is of the form
\begin{equation} \label{dia_sol1} 
  \left( {\begin{array}{c}
   x_{2k-1}   \\   
   x_{2k}
  \end{array} } \right)
= c_1 \left( {\begin{array}{c}
   \tilde{f}_1  \\   
   \tilde{f}_2
  \end{array} } \right) \beta_1^k + c_2 \left( {\begin{array}{c}
   \tilde{g}_1  \\   
   \tilde{g}_2
  \end{array} } \right) \beta_2^k, \ \ k \in \mathbb{N},
\end{equation}
where $(\tilde{f}_1, \ \tilde{f}_2)^t$ and $(\tilde{g}_1, \ \tilde{g}_2)^t$ are eigenvectors corresponding to $\beta_1$ and $\beta_2$ respectively and $c_1$ and $c_2$ are arbitrary constants. Since $\lambda \in S_2$ so $\left|\frac{1}{\alpha_1} \right| = 1$ i.e., $|\beta_1| = 1.$ Also from the relation $|1 - \sqrt{z}| \leq |1 + \sqrt{z}|$ for all $z \in \mathbb{C},$ we have $|\alpha_1| = 1$ implies $|\alpha_2| \leq 1.$ Therefore $\left|\frac{1}{\alpha_2} \right| = |\beta_2| \geq 1.$ Using these relations in \eqref{dia_sol1} we get $x_k \nrightarrow 0$ as $k \rightarrow \infty.$ Hence $(x_k) \notin l_p^*.$ Also let $C$ is not diagonalisable. Then $\chi = 0$ and the eigenvalue of $C$ is 
\begin{equation*}
\beta = \frac{s_1s_2 - t_1(r_2 - \lambda)- t_2(r_1 - \lambda)}{2 t_1t_2}.
\end{equation*}
Since $\chi = 0,$
\begin{equation*}
\beta = \frac{(s_1s_2 - t_1(r_2 - \lambda)- t_2(r_1 - \lambda))^2}{2 t_1t_2(s_1s_2 - t_1(r_2 - \lambda)- t_2(r_1 - \lambda))} = \frac{2(r_2 - \lambda)(r_1 - \lambda)}{s_1s_2 - t_1(r_2 - \lambda)- t_2(r_1 - \lambda)}.
\end{equation*}
This implies $|\beta| = 1$ since $\lambda \in S_2.$ Then similar as Theorem \ref{spectrum_l1}, the general solution of \eqref{eigen_*1} is of the form
\begin{equation} \label{nondia_sol1}
  \left( {\begin{array}{c}
   x_{2k-1}   \\   
   x_{2k}
  \end{array} } \right)
  =
  \left( {\begin{array}{c}
   \tilde{c_1} \tilde{h}_1 \beta^k + \tilde{c_2}(\tilde{h}_1 k \beta^{k-1} + \tilde{u}_1 \beta^k)   \\   
   \tilde{c_1} \tilde{h}_2 \beta^k + \tilde{c_2}(\tilde{h}_2 k \beta^{k-1} + \tilde{u}_2 \beta^k)
  \end{array} } \right)
  \end{equation}
where $(\tilde{h}_1, \ \tilde{h}_2)^t$ and $(\tilde{u}_1, \ \tilde{u}_2)^t$ are the eigenvector and generalised eigenvector of $\beta$ respectively and $\tilde{c_1}, \tilde{c_2}$ are arbitrary constants. Since $|\beta| = 1,$ from \eqref{nondia_sol1} it follows that, $x_k \nrightarrow 0$ as $k \rightarrow \infty.$ Hence $(x_k) \notin l_p^*.$ Therefore $\lambda \notin \sigma_p(B^*, l_p^*).$ This proves that $\sigma_p(B^*, l_p^*) = S_1$.

Case 2: $t_1 = t_2 = 0.$

In this case we have
\begin{equation*}
S_1 = \left\lbrace \lambda \in \mathbb{C} : \left| \frac{(\ar)(\br)}{s_1s_2}\right| < 1 \right\rbrace
\end{equation*}
Then \eqref{eigen_*1} reduces to
\begin{eqnarray} \label{teqzero}
\left.\begin{aligned}
(r_1 - \lambda) x_{2k -1} + s_1 x_{2k} &=& 0\\
(r_2 - \lambda) x_{2k } + s_2 x_{2k+1} &=& 0
\end{aligned}\right\}
\end{eqnarray}
for $k = 1,2,3, \cdots$. If $x_1 = 0$ then the above equations imply $x_k = 0$ for all $k \in \mathbb{N}.$ Hence let $x_1 \neq 0.$ Then from \eqref{teqzero} we have
\begin{eqnarray*}
x_{2k} &=& - \frac{\ar}{s_1} \left(\frac{(\ar)(\br)}{s_1s_2} \right)^{k-1} x_1,\\\\
x_{2k+1} &=&  \left(\frac{(\ar)(\br)}{s_1s_2} \right)^{k}x_1
\end{eqnarray*}
for $k = 1,2,3, \cdots.$ Therefore $(x_k) \in l_p^* \cong l_{q}$ if and only if $\left| \frac{(\ar)(\br)}{s_1s_2}\right| <1.$ This proves the required result.

\end{proof}

\begin{theorem} \label{res_cont}
The residual spectrum and continuous spectrum of $B$ over $l_p$ are given by
\begin{enumerate}
\item[(i)] $\sigma_r(B, l_p)=\left\lbrace \lambda \in \mathbb{C} : \left| \frac{2 (r_1 - \lambda) (r_2 - \lambda)}{s_1s_2 - t_1(r_2 - \lambda)- t_2(r_1 - \lambda) + \sqrt{\chi}} \right| < 1 \right\rbrace,$\\
\item[(ii)] $\sigma_c(B, l_p)=\left\lbrace \lambda \in \mathbb{C} : \left| \frac{2 (r_1 - \lambda) (r_2 - \lambda)}{s_1s_2 - t_1(r_2 - \lambda)- t_2(r_1 - \lambda) + \sqrt{\chi}} \right| = 1 \right\rbrace.$
\end{enumerate}
Where $\chi$ is mentioned in \eqref{chi}.
\end{theorem}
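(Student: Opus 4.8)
The plan is to read off both parts directly from the three preceding theorems, since the analytic content is already in place; what remains is to assemble the pieces using the fine-spectrum partition together with Proposition \ref{proposition}. Throughout, write $\displaystyle \Phi(\lambda) = \left| \frac{2 (\ar)(\br)}{s_1 s_2 - t_1(\br) - t_2(\ar) + \sqrt{\chi}} \right|$ for the modulus occurring in each statement, so that Theorem \ref{spectrum_l1} reads $\sigma(B, l_p) = \{\lambda : \Phi(\lambda) \leq 1\}$ and Theorem \ref{pointlp*} reads $\sigma_p(B^*, l_p^*) = \{\lambda : \Phi(\lambda) < 1\}$.

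First I would record the two facts that drive the argument. By Theorem \ref{pspectrum_l1} the point spectrum is empty, hence $N(B - \lambda I) = \{0\}$ for \emph{every} $\lambda \in \mathbb{C}$; and by Proposition \ref{proposition}(c) together with Theorem \ref{pointlp*},
\begin{equation*}
\sigma_{co}(B, l_p) = \sigma_p(B^*, l_p^*) = \{\lambda : \Phi(\lambda) < 1\}.
\end{equation*}
Since $\sigma_{co}(B, l_p)$ is by definition the set of $\lambda$ with $\overline{R(B - \lambda I)} \neq l_p$, this already determines precisely which $\lambda$ fail to have dense range.

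For part (i), I would note that the residual spectrum consists of those $\lambda$ with $N(B - \lambda I) = \{0\}$ \emph{and} $\overline{R(B - \lambda I)} \neq l_p$. The first requirement holds for all $\lambda$ by the emptiness of the point spectrum, so the residual spectrum collapses onto the second requirement alone, giving $\sigma_r(B, l_p) = \sigma_{co}(B, l_p) = \{\lambda : \Phi(\lambda) < 1\}$, as claimed. For part (ii), I would invoke the disjoint decomposition $\sigma(B, l_p) = \sigma_p(B, l_p) \cup \sigma_r(B, l_p) \cup \sigma_c(B, l_p)$. Substituting $\sigma_p(B, l_p) = \emptyset$, the value of $\sigma_r$ just found, and $\sigma(B, l_p) = \{\lambda : \Phi(\lambda) \leq 1\}$ from Theorem \ref{spectrum_l1}, the continuous spectrum is forced to be the complement $\{\lambda : \Phi(\lambda) \leq 1\} \setminus \{\lambda : \Phi(\lambda) < 1\} = \{\lambda : \Phi(\lambda) = 1\}$.

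There is no genuine obstacle here: the theorem is a corollary of the machinery already assembled, and the only points requiring care are correctly quoting the identity $\sigma_{co}(B,l_p) = \sigma_p(B^*, l_p^*)$ from Proposition \ref{proposition}(c), using the emptiness of the point spectrum to pass from the compression spectrum to the residual spectrum, and observing that the three parts of the fine-spectrum decomposition are pairwise disjoint, so that the set-theoretic subtraction in part (ii) is legitimate and yields exactly the boundary locus $\Phi(\lambda) = 1$.
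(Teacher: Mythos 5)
Your proposal is correct and follows essentially the same route as the paper: both arguments reduce the residual spectrum to the point spectrum of the adjoint via the duality between dense range and injectivity of $B^* - \lambda I$, use $\sigma_p(B, l_p) = \emptyset$ to remove the kernel condition, and obtain the continuous spectrum from the disjoint decomposition $\sigma = \sigma_p \cup \sigma_r \cup \sigma_c$. The only cosmetic difference is that you quote Proposition \ref{proposition}(c) ($\sigma_{co}(B,l_p)=\sigma_p(B^*,l_p^*)$) where the paper derives the equivalent identity $\sigma_r(B, l_p) = \sigma_p(B^*, l_p^*) \setminus \sigma_p(B, l_p)$ from Lemma \ref{denserange}; these are the same underlying fact.
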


\begin{proof}
From Lemma \ref{denserange} it can be easily derived that,
\begin{equation*}
\sigma_r(B, l_p) = \sigma_p(B^*, l_p^*) \setminus \sigma_p(B, l_p).
\end{equation*}
Hence the results for residual spectrum follows from Theorem \ref{pspectrum_l1} and Theorem \ref{pointlp*}. Also since the spectrum is a disjoint union of point spectrum, residual spectrum and continuous spectrum, the other result follows immediately.
\end{proof}

\begin{theorem}\label{goldberg}
The operator $B$ satisfies the following relations,
\begin{enumerate}
\item[(a)]  $A_3 \sigma(B, l_p)=B_3 \sigma(B, l_p)= C_3 \sigma(B, l_p) = \emptyset ,$
\item[(b)] $B_2 \sigma(B, l_p) = \left\lbrace \lambda \in \mathbb{C} : \left| \frac{2 (r_1 - \lambda) (r_2 - \lambda)}{s_1s_2 - t_1(r_2 - \lambda)- t_2(r_1 - \lambda) + \sqrt{\chi}} \right| = 1 \right\rbrace,$
\item[(c)] $ C_2 \sigma(B, l_p) \supseteq \left\lbrace \lambda \in \mathbb{C} : \left| \frac{2 (r_1 - \lambda) (r_2 - \lambda)}{s_1s_2 - t_1(r_2 - \lambda)- t_2(r_1 - \lambda) + \sqrt{\chi}} \right| < 1 \right\rbrace \setminus \{r_1, r_2\},$ 
\item[(d)] $C_1 \sigma(B, l_p) \subseteq \{r_1, r_2\}.$
\end{enumerate}
\end{theorem}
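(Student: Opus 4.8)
The plan is to extract all four statements from Table~1 by combining the nine Goldberg positions with the spectral information already in hand, so that the only genuinely analytic step is the division of the residual spectrum between the states $C_1$ and $C_2$. Part~(a) is immediate: each cell of the third column of Table~1 forces $N(B-\lambda I)\neq\{0\}$, i.e.\ $\lambda\in\sigma_p(B,l_p)$, and since $\sigma_p(B,l_p)=\emptyset$ by Theorem~\ref{pspectrum_l1} the entire third column is empty, whence $A_3\sigma(B,l_p)=B_3\sigma(B,l_p)=C_3\sigma(B,l_p)=\emptyset$. Part~(b) is equally direct: in Table~1 the continuous spectrum appears only in the position $B_2$, so $B_2\sigma(B,l_p)=\sigma_c(B,l_p)$, and Theorem~\ref{res_cont}(ii) identifies this with the set where the displayed modulus equals $1$.

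For parts~(c) and~(d) I would first observe that the third row of Table~1 is precisely the compression spectrum: by definition $\sigma_{co}(B,l_p)=\{\lambda:\overline{R(B-\lambda I)}\neq X\}$, and Proposition~\ref{proposition}(c) together with Theorem~\ref{pointlp*} identifies this with $\sigma_p(B^*,l_p^*)=S_1$, the set where the displayed modulus is $<1$. Because the third column is empty by~(a), this row splits as a disjoint union $S_1=C_1\sigma(B,l_p)\cup C_2\sigma(B,l_p)$; throughout $S_1$ the operator $B-\lambda I$ is injective (Theorem~\ref{pspectrum_l1}), and the two states are separated exactly according to whether $(B-\lambda I)^{-1}$, taken on $R(B-\lambda I)$, is bounded (state $C_1$, i.e.\ $\lambda\notin\sigma_{ap}(B,l_p)$) or unbounded (state $C_2$, i.e.\ $\lambda\in\sigma_{ap}(B,l_p)$). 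Consequently (c) and (d) are jointly equivalent to the single inclusion $S_1\setminus\{r_1,r_2\}\subseteq\sigma_{ap}(B,l_p)$: it yields (c) at once, and then (d) follows by complementation, $C_1\sigma(B,l_p)=S_1\setminus C_2\sigma(B,l_p)\subseteq\{r_1,r_2\}$.

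The main obstacle is precisely this inclusion, namely showing that $B-\lambda I$ is not bounded below for $\lambda\in S_1\setminus\{r_1,r_2\}$. The natural attempt is to exploit the explicit formal inverse of Theorem~\ref{spectrum_l1}: for $\lambda\in S_1$ the dominant root satisfies $|\alpha_1|>1$, so the entries $(a_k)$ and $(b_k)$ in \eqref{dia_sol}--\eqref{nondia_sol} are not $p$-summable, and one would hope to manufacture unit vectors $x^{(n)}$ with $\|(B-\lambda I)x^{(n)}\|_{l_p}\to 0$ by truncating and renormalising the columns of that inverse while controlling the error created by the finite bandwidth. I expect this to be the delicate point of the whole theorem, for the following reason: unboundedness of the formal inverse \emph{as a matrix on all of} $l_p$ only certifies that $B-\lambda I$ is not surjective, i.e.\ $\lambda\in\sigma_\delta(B,l_p)$, whereas placing $\lambda$ in $C_2$ requires the strictly stronger failure of a lower bound on the range, and a direct truncation of the growing formal solution tends to return a positive estimate $\|(B-\lambda I)x\|_{l_p}\gtrsim\|x\|_{l_p}$ rather than an approximate null sequence. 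Pinning down $\sigma_{ap}(B,l_p)$ on $S_1$ is therefore the heart of the argument. The two excluded values $\lambda=r_1,r_2$ are treated separately, since there $B-\lambda I$ already fails to have dense range by the degenerate mechanism noted in the proof of Theorem~\ref{spectrum_l1}; their $C_1$/$C_2$ status is left open, which is exactly why (c) and (d) are phrased as inclusions rather than as equalities.
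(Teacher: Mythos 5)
Your treatment of (a) and (b) is correct and coincides with the paper's proof: every state in column 3 of Table 1 forces $\lambda\in\sigma_p(B,l_p)$, which is empty by Theorem \ref{pspectrum_l1}, and $B_2\sigma(B,l_p)=\sigma_c(B,l_p)$ is then read off from Theorem \ref{res_cont}. Your bookkeeping for (c) and (d) is also right: writing $S_1$ for the set where the displayed modulus is $<1$, row C equals $\sigma_{co}(B,l_p)=\sigma_p(B^*,l_p^*)=S_1$, this row splits as the disjoint union of $C_1\sigma(B,l_p)$ and $C_2\sigma(B,l_p)$ because $C_3\sigma(B,l_p)=\emptyset$, and both (c) and (d) are equivalent to the single inclusion $S_1\setminus\{r_1,r_2\}\subseteq\sigma_{ap}(B,l_p)$. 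But your proposal never proves that inclusion; it only explains why the obvious attempt (truncating the non-summable columns of the formal inverse) fails to produce approximate null vectors. As a proof, it is therefore incomplete exactly where all the substance of (c) and (d) lies.

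The essential point, however, is that this is not a gap you failed to close: it cannot be closed, and the paper's own proof bridges it with an invalid inference. The paper argues that for $\alpha\in\sigma_r(B,l_p)\setminus\{r_1,r_2\}$ the proof of Theorem \ref{spectrum_l1} gives ``$(B-\alpha I)^{-1}\notin B(l_p)$,'' and concludes $\alpha\in C_2\sigma(B,l_p)$. What that proof actually establishes is that the first column $(a_k)$ of the formal triangular inverse is not in $l_p$, i.e.\ that $e^{(1)}\notin R(B-\alpha I)$. That is a failure of surjectivity (row C, already known); it says nothing about whether $(B-\alpha I)^{-1}$ is bounded on $R(B-\alpha I)$, i.e.\ whether $B-\alpha I$ is bounded below, which is what separates column 1 from column 2. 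This is precisely the conflation you warned against, and your suspicion that truncation ``tends to return a positive estimate'' is vindicated by a counterexample to the theorem itself. Take $r_1=r_2=0$, $s_1=s_2=1$, $t_1=t_2=0$ (admissible: both $t_i$ vanish, and $\sqrt{s_i^2}=s_i$), so that $B$ is the forward shift, $\chi=1$, and $S_1=\{\lambda\in\mathbb{C}:|\lambda|<1\}$. Since $B$ is then an isometry of $l_p$, for every $x\in l_p$ and $0<|\lambda|<1$,
\begin{equation*}
\|(B-\lambda I)x\|_{l_p}\;\geq\;\|Bx\|_{l_p}-|\lambda|\,\|x\|_{l_p}\;=\;(1-|\lambda|)\,\|x\|_{l_p},
\end{equation*}
so $B-\lambda I$ is bounded below, its inverse is bounded on its (closed, non-dense) range, and hence $\lambda\in C_1\sigma(B,l_p)$, not $C_2\sigma(B,l_p)$. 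Thus $C_2\sigma(B,l_p)$ misses all of $S_1\setminus\{0\}$, and $C_1\sigma(B,l_p)\supseteq S_1\setminus\{0\}\not\subseteq\{r_1,r_2\}$: parts (c) and (d) are false as stated. (The phenomenon is not special to $t_1=t_2=0$: at any $\lambda\in S_1$ where $B-\lambda I$ has closed range, injectivity from Theorem \ref{pspectrum_l1} already forces boundedness below, hence state $C_1$.) So the only defect of your proposal is that it treats as an open difficulty what is in fact a counterexample; your analysis pinpoints exactly the step at which the paper's proof of (c) and (d) breaks down.
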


\begin{proof}
From Table 1 we have the following relations
\begin{eqnarray*}
\sigma_p(B, l_p) &=& A_3 \sigma(B, l_p) \cup B_3 \sigma(B, l_p) \cup C_3 \sigma(B, l_p),\\
\sigma_r(B, l_p) &=& C_1 \sigma(B, l_p) \cup C_2 \sigma(B, l_p),\\
\sigma_c(B, l_p) &=& B_2 \sigma(B, l_p). 
\end{eqnarray*}
 The result in $(a)$ follows from the above relations and Theorem \ref{pspectrum_l1}. Also since $\sigma_c(B, l_p) = B_2 \sigma(B, l_p),$ the result in $(b)$ follows from Theorem \ref{res_cont}. Again from the proof of Theorem \ref{spectrum_l1} it follows that, for any $\alpha \in \sigma_r(B, l_p)\setminus \{r_1, r_2\}$ the operator $(B - \alpha I)^{-1} \notin B(l_p)$. This implies that $\sigma_r(B, l_p) \setminus \{r_1, r_2\} \subseteq C_2 \sigma(B, l_p)$ and  $C_1 \sigma(B, l_p) \subseteq \{r_1, r_2\}.$
\end{proof}

\begin{theorem} \label{ap_lp}
The operator $B$ satisfies the following relations
\begin{enumerate}
\item[(a)] $ \sigma_{ap}(B, l_p) \supseteq  \left\lbrace \lambda \in \mathbb{C} : \left| \frac{2 (r_1 - \lambda) (r_2 - \lambda)}{s_1s_2 - t_1(r_2 - \lambda)- t_2(r_1 - \lambda) + \sqrt{\chi}} \right| \leq 1 \right\rbrace\setminus \{r_1, r_2\}, $
\item[(b)] $\sigma_{ap}(B^*, l_p^*) = \left\lbrace \lambda \in \mathbb{C} : \left| \frac{2 (r_1 - \lambda) (r_2 - \lambda)}{s_1s_2 - t_1(r_2 - \lambda)- t_2(r_1 - \lambda) + \sqrt{\chi}} \right| \leq 1 \right\rbrace,$
\item[(c)] $\sigma_{\delta}(B, l_p) = \left\lbrace \lambda \in \mathbb{C} : \left| \frac{2 (r_1 - \lambda) (r_2 - \lambda)}{s_1s_2 - t_1(r_2 - \lambda)- t_2(r_1 - \lambda) + \sqrt{\chi}} \right| \leq 1 \right\rbrace,$
\item[(d)] $\sigma_{co}(B, l_p) = \left\lbrace \lambda \in \mathbb{C} : \left| \frac{2 (r_1 - \lambda) (r_2 - \lambda)}{s_1s_2 - t_1(r_2 - \lambda)- t_2(r_1 - \lambda) + \sqrt{\chi}} \right| < 1 \right\rbrace.$
\end{enumerate}
\end{theorem}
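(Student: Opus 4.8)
The plan is to derive all four assertions from the duality relations of Proposition \ref{proposition} together with the spectra already computed in Theorems \ref{spectrum_l1}, \ref{pspectrum_l1}, \ref{pointlp*} and \ref{res_cont}, and from the Goldberg classification recorded in Theorem \ref{goldberg} and Table 1; no fresh analytic estimate is needed. Throughout I would write
\[
S = \left\{ \lambda \in \mathbb{C} : \left| \frac{2(\ar)(\br)}{s_1 s_2 - t_1(\br) - t_2(\ar) + \sqrt{\chi}} \right| \le 1 \right\},
\]
and let $S_1$, $S_2$ denote the subsets obtained by replacing $\le 1$ with $< 1$ and with $= 1$ respectively, so that $S = S_1 \cup S_2$ is a disjoint union, $\sigma(B, l_p) = S$ by Theorem \ref{spectrum_l1}, and $S_1 = \sigma_p(B^*, l_p^*) = \sigma_r(B,l_p)$ by Theorems \ref{pointlp*} and \ref{res_cont}. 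Note also that $r_1, r_2 \in S_1$, since the numerator vanishes at those points (this is already used in Theorem \ref{pointlp*}).

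First I would dispatch parts (d), (b) and (c), each of which is an immediate transcription of Proposition \ref{proposition}. For (d), relation (c) of that proposition gives $\sigma_{co}(B, l_p) = \sigma_p(B^*, l_p^*)$, which equals $S_1$ by Theorem \ref{pointlp*}. For (b), relation (d) of the proposition reads $\sigma(B, l_p) = \sigma_p(B, l_p) \cup \sigma_{ap}(B^*, l_p^*)$; since $\sigma_p(B, l_p) = \emptyset$ by Theorem \ref{pspectrum_l1}, this forces $\sigma_{ap}(B^*, l_p^*) = \sigma(B, l_p) = S$. For (c), relation (b) of the proposition gives $\sigma_\delta(B, l_p) = \sigma_{ap}(B^*, l_p^*)$, which by part (b) equals $S$.

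The remaining part (a) I would obtain from the Goldberg table. Reading off Table 1, the states whose cells list $\lambda \in \sigma_{ap}(B, l_p)$ are $A_3$, $B_2$, $B_3$, $C_2$ and $C_3$; by Theorem \ref{goldberg}(a) we have $A_3\sigma = B_3\sigma = C_3\sigma = \emptyset$, so $\sigma_{ap}(B, l_p) \supseteq B_2\sigma(B, l_p) \cup C_2\sigma(B, l_p)$. Theorem \ref{goldberg}(b) identifies $B_2\sigma(B, l_p) = S_2$ and Theorem \ref{goldberg}(c) gives $C_2\sigma(B, l_p) \supseteq S_1 \setminus \{r_1, r_2\}$. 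Combining these,
\[
\sigma_{ap}(B, l_p) \supseteq S_2 \cup \bigl(S_1 \setminus \{r_1, r_2\}\bigr) = S \setminus \{r_1, r_2\},
\]
which is exactly (a).

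I do not expect a serious obstacle, because the computational heart of the matter — the resolvent analysis giving $\sigma(B, l_p)$ and the difference-equation analysis giving $\sigma_p(B^*, l_p^*)$ — has already been carried out in the preceding theorems, and this statement merely repackages those facts through duality and the Goldberg scheme. The one point demanding care is that part (a) is stated as an inclusion rather than an equality: the two exceptional values $\lambda = r_1, r_2$ lie in $\sigma_r(B,l_p) = S_1$, but Theorem \ref{goldberg}(d) pins their Goldberg state down only to $C_1$ or $C_2$, and since $\lambda \notin \sigma_{ap}$ in state $C_1$ while $\lambda \in \sigma_{ap}$ in state $C_2$, their approximate-point status is genuinely undetermined. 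One must therefore be content with $S \setminus \{r_1, r_2\}$ and resist asserting equality in (a).
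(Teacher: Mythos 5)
Your proposal is correct and follows essentially the same route as the paper: parts (b), (c), (d) are read off from the duality relations (d), (b), (c) of Proposition \ref{proposition} together with Theorems \ref{spectrum_l1}, \ref{pspectrum_l1} and \ref{pointlp*}, and part (a) comes from Table 1 combined with Theorem \ref{goldberg}. The only cosmetic difference is that the paper gets (a) from the identity $\sigma_{ap}(B, l_p) = \sigma(B, l_p) \setminus C_1\sigma(B, l_p)$ and the inclusion $C_1\sigma(B, l_p) \subseteq \{r_1, r_2\}$, whereas you assemble the same conclusion as the union $B_2\sigma(B,l_p) \cup C_2\sigma(B,l_p)$; these are equivalent readings of the same table.
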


\begin{proof}
From Table 1 we have $\sigma_{ap}(B, l_p) = \sigma(B, l_p) \setminus C_1 \sigma(B, l_p)$ and the result in $(a)$ follows from Theorem \ref{goldberg}. The results in $(b),$ $(c)$ and $(d)$ follow from the relations $(d),$ $(b)$ and $(c)$ in Proposition \ref{proposition} respectively.
\end{proof}

\section{Spectra of $B$ on $l_1$}
In this section we give the results on the fine spectrum of the operator $B$ on the sequence space $l_1.$ First we mention the result on the boundedness of $B.$ 

\begin{theorem}
The operator $B: l_1 \rightarrow l_1$ is a bounded linear operator and
\begin{equation*}
\|B\|_{l_1}  = \max \{|r_1|+|s_1|+|t_1|, \ |r_2| +|s_2|+|t_2|\}.
\end{equation*}
\end{theorem}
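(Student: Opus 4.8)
The plan is to read the operator norm directly off the column structure of the matrix, using the $l_1$ characterisation in Lemma~\ref{boundedl1}. The argument parallels the $l_p$ computation in Theorem~\ref{norm}, but is cleaner: on $l_1$ the operator norm coincides \emph{exactly} with the supremum of the $l_1$-norms of the columns, rather than merely being trapped between two bounds.

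First I would establish the upper bound together with boundedness. For any $x = (x_k) \in l_1$, writing $B = (B_{nk})$, the triangle inequality followed by interchange of summation (legitimate since all summands are non-negative) gives
\[
\|Bx\|_{l_1} = \sum_n \Bigl| \sum_k B_{nk} x_k \Bigr| \leq \sum_k |x_k| \sum_n |B_{nk}| \leq \Bigl(\sup_k \sum_n |B_{nk}| \Bigr) \|x\|_{l_1},
\]
so that $\|B\|_{l_1} \leq \sup_k c_k$, where $c_k = \sum_n |B_{nk}|$ denotes the $l_1$-norm of the $k$-th column. Since each $c_k$ is manifestly finite, Lemma~\ref{boundedl1} simultaneously confirms that $B \in B(l_1)$.

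Next I would compute the column sums explicitly. Because the bands are periodic of period two, the $k$-th column of $B$ has at most three nonzero entries, namely the diagonal entry, the first sub-diagonal entry, and the second sub-diagonal entry. Reading these off \eqref{mainop3}, the odd-indexed columns contribute $c_k = |r_1|+|s_1|+|t_1|$ while the even-indexed columns contribute $c_k = |r_2|+|s_2|+|t_2|$. Hence $\sup_k c_k = \max\{|r_1|+|s_1|+|t_1|, \ |r_2|+|s_2|+|t_2|\}$.

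Finally, for the matching lower bound I would test $B$ against the standard unit vectors $e^{(k)}$, exactly as in Theorem~\ref{norm}. Since $\|Be^{(k)}\|_{l_1} = c_k$ and $\|e^{(k)}\|_{l_1} = 1$, taking the supremum over $k$ yields $\|B\|_{l_1} \geq \sup_k c_k$, and combined with the upper bound this forces equality. There is no genuine obstacle in this proof; the only point demanding a little care is the bookkeeping of the periodic indices, so that the odd and even columns are correctly matched to their respective band constants.
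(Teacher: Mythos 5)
Your proposal is correct and follows essentially the same route as the paper: an upper bound via the column-sum characterisation of the $l_1$ operator norm (the content of Lemma~\ref{boundedl1}), and a matching lower bound by testing against the unit vectors $e^{(k)}$, exactly as in Theorem~\ref{norm}. The only difference is cosmetic --- you spell out the interchange of summation and the explicit column sums, which the paper leaves implicit.
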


\begin{proof}
From Lemma \ref{boundedl1} it follows that $B$ is a bounded linear operator from $l_1$ to itself. For any $(x_k)\in l_1$ we have
\[\|Bx\|_{l_1}\leq \max \{|r_1|+|s_1|+|t_1|, \ |r_2| +|s_2|+|t_2|\} \|x\|_{l_1}.\]
This implies $\|B\|_{l_1} \leq \max \{|r_1|+|s_1|+|t_1|, \ |r_2| +|s_2|+|t_2|\}.$ Also let $e^{(k)}$ be a sequence in $l_1$ whose $k$-th component is one and other components are zero. Then similarly as Theorem \ref{norm} it can be derived that
\[\|B\|_{l_1} \geq \max \{|r_1|+|s_1|+|t_1|, \ |r_2| +|s_2|+|t_2|\}.\]
This proves the result.
\end{proof}

Since the spectrum and point spectrum of $B$ on $l_1$ can be derived using the similar arguments used in the case of $l_p,$ we omit the proof and give the statements only.

\begin{theorem} 
Let $s_1, s_2$ be two complex numbers such that $\sqrt{s_1^2} = s_1$ and $\sqrt{s_2^2} = s_2$ then the spectrum of $B$ on $l_1$ is given by 
\begin{equation*}
\sigma(B, l_1) = \left\lbrace \lambda \in \mathbb{C} : \left| \frac{2 (r_1 - \lambda) (r_2 - \lambda)}{s_1s_2 - t_1(r_2 - \lambda)- t_2(r_1 - \lambda) + \sqrt{\chi}} \right| \leq 1 \right\rbrace
\end{equation*} 
where $\chi$ is mentioned in \eqref{chi}.
\end{theorem}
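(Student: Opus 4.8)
The plan is to mirror the proof of Theorem~\ref{spectrum_l1}, replacing the $l_p$ boundedness criterion (via Lemma~\ref{boundedlp}) with the direct $l_1$ criterion of Lemma~\ref{boundedl1}, and otherwise reusing the difference-equation machinery verbatim. Concretely, I set $S = \left\lbrace \lambda \in \mathbb{C} : \left| \frac{2 (r_1 - \lambda) (r_2 - \lambda)}{s_1s_2 - t_1(r_2 - \lambda)- t_2(r_1 - \lambda) + \sqrt{\chi}} \right| \leq 1 \right\rbrace$ and prove the two inclusions $\sigma(B, l_1) \subseteq S$ and $S \subseteq \sigma(B, l_1)$ separately.

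For the inclusion $\sigma(B, l_1) \subseteq S$, I would take $\lambda \notin S$, so that $\lambda \notin \{r_1, r_2\}$ and $B - \lambda I$ admits the formal lower-triangular inverse of the form \eqref{eq_inverse}, whose entries $(a_k)$ and $(b_k)$ satisfy the recurrences \eqref{eq_a1} and \eqref{eq_b1}. Exactly as in Theorem~\ref{spectrum_l1}, I split into the diagonalisable case (leading to \eqref{dia_sol} with $|\alpha_1| < 1$ and the derived bound $|\alpha_2| < 1$) and the non-diagonalisable case (leading to \eqref{nondia_sol} with $|\alpha| < 1$); in both cases the solutions decay geometrically, so $(a_k) \in l_1$, and the same argument applied to $A_2$ gives $(b_k) \in l_1$. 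The key difference is the final boundedness step: rather than invoking Lemma~\ref{boundedlp}, I observe that since $(a_k), (b_k) \in l_1$, each column of the matrix \eqref{eq_inverse} is a subsequence of $(a_k)$ or $(b_k)$ and hence has finite $l_1$ norm, and the supremum of these column norms is bounded by $\|(a_k)\|_{l_1} + \|(b_k)\|_{l_1}$. By Lemma~\ref{boundedl1} this shows $(B - \lambda I)^{-1} \in B(l_1)$, so $\lambda \notin \sigma(B, l_1)$.

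For the reverse inclusion $S \subseteq \sigma(B, l_1)$, I take $\lambda \in S$. If $\lambda \in \{r_1, r_2\}$ then $B - \lambda I$ fails to have dense range (as in Theorem~\ref{spectrum_l1}), so $\lambda \in \sigma(B, l_1)$. Otherwise the formal inverse has entries governed by \eqref{eq_a1}, and the condition $\lambda \in S$ forces either $|\alpha_1| \geq 1$ with $c_1 \neq 0$ (when $\chi \neq 0$, via \eqref{dia_sol}) or $|\alpha| \geq 1$ (when $\chi = 0$, via \eqref{nondia_sol}); in either case $(a_k) \notin l_1$. Testing against $x = (1, 0, 0, \cdots) \in l_1$ gives $(B - \lambda I)^{-1} x = (a_1, a_2, \cdots) \notin l_1$, so the formal inverse is not in $B(l_1)$ and $\lambda \in \sigma(B, l_1)$.

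I expect the only genuine point requiring care — and hence the main obstacle — to be the boundedness step in the first inclusion, since this is precisely where the $l_1$ argument diverges from the $l_p$ argument of Theorem~\ref{spectrum_l1}: one must verify that controlling the $l_1$ norms of the \emph{columns} (Lemma~\ref{boundedl1}) suffices, whereas the $l_p$ proof needed both row and column control together with Lemma~\ref{boundedlp}. Everything else is a direct transcription of the earlier proof, which is exactly why the authors remark that the argument is ``similar'' and state the theorem without a detailed proof.
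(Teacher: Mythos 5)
Your proposal is correct and is essentially the paper's intended argument: the paper explicitly omits this proof, stating that the spectrum of $B$ on $l_1$ "can be derived using the similar arguments used in the case of $l_p$," which is precisely the transcription you carry out. Your key adaptation is also the right one --- the $l_p$ proof already establishes $(B-\lambda I)^{-1} \in (l_1, l_1)$ via the column-norm criterion of Lemma~\ref{boundedl1} as an intermediate step (and the reverse inclusion follows a fortiori, since $(a_k) \notin l_p$ together with $l_1 \subseteq l_p$ gives $(a_k) \notin l_1$), so nothing beyond what you describe is needed.
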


\begin{theorem} 
The point spectrum of $B$ on $l_1$ is given by
\begin{equation*}
\sigma_p(B, l_1) = \emptyset.
\end{equation*}
\end{theorem}

\begin{theorem} 
The point spectrum of $B^*$ over $l_1^*$ is given by
\begin{equation*}
\sigma_p(B^*, l_1^*) = \sigma(B, l_1) = \left\lbrace \lambda \in \mathbb{C} : \left| \frac{2 (r_1 - \lambda) (r_2 - \lambda)}{s_1s_2 - t_1(r_2 - \lambda)- t_2(r_1 - \lambda) + \sqrt{\chi}} \right| \leq 1 \right\rbrace,
\end{equation*}
where $\chi$ is mentioned in \eqref{chi}.
\end{theorem}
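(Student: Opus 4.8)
The plan is to reuse, almost verbatim, the algebraic machinery from the proof of Theorem \ref{pointlp*}, changing only the final membership test for the candidate eigenvector: since $l_1^* \cong l_\infty$, the admissible solutions are the \emph{bounded} sequences rather than the $q$-summable ones, and this is precisely what enlarges the point spectrum of the adjoint from the open region to the full closed region.

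First I would record the trivial half. By Proposition \ref{proposition}(a), $\sigma(B^*, l_1^*) = \sigma(B, l_1)$, and by the spectrum theorem on $l_1$ this equals $\left\lbrace \lambda : \left| \tfrac{2(\ar)(\br)}{s_1s_2 - t_1(\br) - t_2(\ar) + \sqrt{\chi}} \right| \le 1 \right\rbrace$. Writing $S_1$ for the open region (modulus $<1$) and $S_2$ for its boundary (modulus $=1$), the inclusion $\sigma_p(B^*, l_1^*) \subseteq \sigma(B^*, l_1^*) = S_1 \cup S_2$ is immediate. The substance of the theorem is the reverse inclusion, i.e. that every such $\lambda$ is a genuine eigenvalue of $B^*$ on $l_\infty$.

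Next I would set up $(B^* - \lambda I)x = 0$. For $\lambda \in \{r_1, r_2\}$ the finitely supported eigenvectors $(1,0,0,\dots)$ and $(1, -\tfrac{r_1-r_2}{s_1}, 0, \dots)$ produced in Theorem \ref{pointlp*} lie in $l_\infty$, so these two points belong to $\sigma_p(B^*, l_1^*)$. For $\lambda \notin \{r_1, r_2\}$ the equations reduce to the same system \eqref{eigen_*1}, governed by the same companion matrix $C$ whose characteristic polynomial is reciprocal to $f(\alpha)$, with eigenvalues $\beta_1 = 1/\alpha_1$ and $\beta_2 = 1/\alpha_2$. On $S_1$ one has $|\beta_1|<1$, so the single geometric mode $\tilde f\,\beta_1^{\,k}$ decays and lies in $l_\infty$ exactly as it lay in $l_q$; hence $S_1 \subseteq \sigma_p(B^*, l_1^*)$, just as before.

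The decisive new step, and the only place the argument departs from the $l_p$ case, is the boundary $S_2$. There $|\beta_1| = 1$, and the inequality $|1 - \sqrt{z}| \le |1 + \sqrt{z}|$ forces $|\alpha_2| \le 1$, i.e. $|\beta_2| \ge 1$. In the $l_q$ setting these modes failed to tend to zero and so were not $q$-summable, which is exactly why $S_2$ was excluded from $\sigma_p(B^*, l_p^*)$. Over $l_\infty$ I only need one bounded nontrivial solution, and I would obtain it by isolating the unimodular mode: solve $X_{k+1} = C X_k$ with initial vector the eigenvector of $\beta_1$ (coefficient of the $\beta_2$-branch set to zero), giving $X_k = \tilde f\,\beta_1^{\,k}$, bounded because $|\beta_1| = 1$ and nonzero because $\tilde f \neq 0$. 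In the non-diagonalisable subcase $\chi = 0$ (eigenvalue $\beta$, $|\beta|=1$), I would discard the generalised-eigenvector term carrying $k\beta^{k-1}$ and keep only $\tilde c_1 \tilde h\,\beta^{\,k}$, again bounded; and in the degenerate case $t_1 = t_2 = 0$ the explicit formulas for $x_{2k}, x_{2k+1}$ from Theorem \ref{pointlp*} have common ratio $(\ar)(\br)/(s_1 s_2)$, which is unimodular on the boundary, so those sequences are bounded as well. Thus $S_2 \subseteq \sigma_p(B^*, l_1^*)$, and combining the cases gives $\sigma_p(B^*, l_1^*) \supseteq S_1 \cup S_2 = \sigma(B, l_1)$, hence equality. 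The main obstacle is this boundary analysis: I must check that the unimodular eigenvector of $C$ really yields a solution of the full system \eqref{eigen_*1} (it does, since $X_1$ is a free parameter and $\tilde f$ is a genuine eigenvector) and that suppressing the $|\beta_2| \ge 1$ branch is legitimate (it is, being a choice of the free constant). This is precisely why $\sigma_p(B^*, l_1^*)$ captures the whole closed region while $\sigma_p(B^*, l_p^*)$ captured only its interior.
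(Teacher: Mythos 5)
Your proposal is correct and takes essentially the same route as the paper: the paper's own proof simply says to proceed as in Theorem \ref{pointlp*} up to equation \eqref{l1}, with boundedness on $l_1^* \cong l_\infty$ replacing $q$-summability (so the geometric mode with $\left|\beta_1\right| \leq 1$ now survives, closed region included), and then closes the argument with the same chain $\sigma(B,l_1) \subseteq \sigma_p(B^*,l_1^*) \subseteq \sigma(B^*,l_1^*) = \sigma(B,l_1)$. Your write-up merely makes explicit the boundary, non-diagonalisable, and $t_1=t_2=0$ cases that the paper compresses into ``proceeding similarly.''
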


\begin{proof}
Proceeding similarly upto the equation \eqref{l1} of Theorem \ref{pointlp*}, it can be proved that, $\sigma(B, l_1) \subseteq \sigma_p(B^*, l_1^*)$. To prove the converse part we have
\[\sigma(B, l_1) \subseteq  \sigma_p(B^*, l_1^*) \subseteq \sigma(B^*, l_1^*) = \sigma(B, l_1).\]
\end{proof}

\begin{theorem}\label{res_cont_l1}
The residual spectrum and continuous spectrum of $B$ over $l_1$ are given by
\begin{enumerate}
\item[(i)] $\sigma_r(B, l_1)=\sigma(B, l_1)=\left\lbrace \lambda \in \mathbb{C} : \left| \frac{2 (r_1 - \lambda) (r_2 - \lambda)}{s_1s_2 - t_1(r_2 - \lambda)- t_2(r_1 - \lambda) + \sqrt{\chi}} \right| \leq 1 \right\rbrace$,
\item[(ii)]$\sigma_c(B, l_1)= \emptyset,$
\end{enumerate}
Where $\chi$ is mentioned in Theorem \ref{spectrum_l1}.
\end{theorem}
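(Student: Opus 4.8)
The plan is to mirror the argument already used for Theorem~\ref{res_cont} in the $l_p$ case, since the two immediately preceding theorems supply the $l_1$-analogues of everything needed. The central identity I would establish first is
\begin{equation*}
\sigma_r(B, l_1) = \sigma_p(B^*, l_1^*) \setminus \sigma_p(B, l_1).
\end{equation*}
This follows from the definition of the residual spectrum together with Lemma~\ref{denserange}: a point $\lambda$ lies in $\sigma_r(B, l_1)$ precisely when $N(B - \lambda I) = \{0\}$ (i.e.\ $\lambda \notin \sigma_p(B, l_1)$) and $\overline{R(B - \lambda I)} \neq l_1$. By Lemma~\ref{denserange} the latter condition is equivalent to $(B - \lambda I)^*$ failing to be injective, that is, $\lambda \in \sigma_p(B^*, l_1^*)$. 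Combining these two conditions yields the displayed set difference.

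Next I would substitute the values established in the two theorems just above. Since $\sigma_p(B, l_1) = \emptyset$ and $\sigma_p(B^*, l_1^*) = \sigma(B, l_1)$, the identity collapses to
\begin{equation*}
\sigma_r(B, l_1) = \sigma(B, l_1) \setminus \emptyset = \sigma(B, l_1),
\end{equation*}
which is exactly statement~(i). The explicit description of the set then comes for free from the formula for $\sigma(B, l_1)$.

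For statement~(ii) I would invoke the fact, recorded in the Preliminaries, that the spectrum is the disjoint union $\sigma(B, l_1) = \sigma_p(B, l_1) \cup \sigma_r(B, l_1) \cup \sigma_c(B, l_1)$. Having already shown $\sigma_p(B, l_1) = \emptyset$ and $\sigma_r(B, l_1) = \sigma(B, l_1)$, disjointness forces $\sigma_c(B, l_1) = \emptyset$. There is no real obstacle in this proof: all the analytic difficulty was absorbed into the earlier spectrum and point-spectrum computations, and the only point requiring care is the correct application of Lemma~\ref{denserange} to pass between dense range of $B - \lambda I$ and injectivity of its adjoint. Consequently the write-up can be kept to a few lines, exactly as in the proof of Theorem~\ref{res_cont}.
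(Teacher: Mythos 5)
Your proposal is correct and matches the paper's approach exactly: the paper proves this theorem by declaring it ``similar as Theorem~\ref{res_cont},'' whose proof is precisely your argument --- the identity $\sigma_r(B, l_1) = \sigma_p(B^*, l_1^*) \setminus \sigma_p(B, l_1)$ obtained from Lemma~\ref{denserange}, substitution of the two preceding point-spectrum results, and the disjoint-union decomposition of the spectrum to conclude $\sigma_c(B, l_1) = \emptyset$. Nothing further is needed.
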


\begin{proof}
Proofs are similar as Theorem \ref{res_cont}.
\end{proof}

\begin{theorem}\label{goldberg_l1}
The operator $B$ satisfies the following relations
\begin{enumerate}
\item[(a)]  $A_3 \sigma(B, l_1)=B_3 \sigma(B, l_1)= C_3 \sigma(B, l_1) = \emptyset ,$
\item[(b)] $B_2 \sigma(B, l_1) = \sigma_c(B, l_1) = \emptyset,$
\item[(c)] $C_2 \sigma(B, l_1) \supseteq \left\lbrace \lambda \in \mathbb{C} : \left| \frac{2 (r_1 - \lambda) (r_2 - \lambda)}{s_1s_2 - t_1(r_2 - \lambda)- t_2(r_1 - \lambda) + \sqrt{\chi}} \right| \leq 1 \right\rbrace \setminus \{r_1, r_2\},$ 
\item[(d)] $C_1 \sigma(B, l_1) \subseteq \{r_1, r_2\}.$
\end{enumerate}
\end{theorem}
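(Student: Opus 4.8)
The plan is to read the Goldberg states directly off Table~1, combined with the spectral decomposition already established for $l_1$, in complete parallel with the proof of Theorem~\ref{goldberg}. First I would record the three set identities furnished by the table,
\begin{eqnarray*}
\sigma_p(B, l_1) &=& A_3 \sigma(B, l_1) \cup B_3 \sigma(B, l_1) \cup C_3 \sigma(B, l_1),\\
\sigma_r(B, l_1) &=& C_1 \sigma(B, l_1) \cup C_2 \sigma(B, l_1),\\
\sigma_c(B, l_1) &=& B_2 \sigma(B, l_1),
\end{eqnarray*}
which hold because the three entries of column $3$ all lie in $\sigma_p$, the two entries of row $C$ exhaust $\sigma_r$, and the single state $B_2$ coincides with $\sigma_c$.

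Part~(a) is then immediate from the $l_1$ analogue of Theorem~\ref{pspectrum_l1}, namely $\sigma_p(B, l_1) = \emptyset$: since each of $A_3\sigma$, $B_3\sigma$, $C_3\sigma$ is contained in the empty point spectrum, all three vanish. Part~(b) is equally direct, since $B_2\sigma(B, l_1) = \sigma_c(B, l_1)$ by the table and $\sigma_c(B, l_1) = \emptyset$ by Theorem~\ref{res_cont_l1}(ii).

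The only substantive work is the distribution of $\sigma_r(B, l_1)$ between $C_1$ and $C_2$, i.e.\ parts~(c) and~(d), for which I would reuse the resolvent computation of Theorem~\ref{spectrum_l1}. Fix $\lambda \in \sigma_r(B, l_1) \setminus \{r_1, r_2\}$. Because $\sigma_p(B, l_1) = \emptyset$ we have $N(B - \lambda I) = \{0\}$, so $(B - \lambda I)^{-1}$ exists on the range, and its first column $(a_k)$ is governed by the system $x_{k+1} = A_1 x_k$. As $\lambda$ lies in the closed region $\left|\frac{2(\ar)(\br)}{s_1 s_2 - t_1(\br) - t_2(\ar) + \sqrt{\chi}}\right| \leq 1$, the leading root satisfies $|\alpha_1| \geq 1$, whence $(a_k) \notin l_1$: for $|\alpha_1| > 1$ the sequence grows, while for $|\alpha_1| = 1$ it fails to tend to zero (in the non-diagonalisable case because of the factor $k\alpha^{k-1}$ in \eqref{nondia_sol}), the leading coefficient $c_1$, respectively $\tilde{c}_1$, being nonzero. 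Hence $(B - \lambda I)^{-1} e^{(1)} = (a_1, a_2, \ldots) \notin l_1$, so $(B - \lambda I)^{-1} \notin B(l_1)$; the inverse exists but is unbounded, which places $\lambda$ in state $C_2$. This yields $\sigma_r(B, l_1) \setminus \{r_1, r_2\} \subseteq C_2\sigma(B, l_1)$, proving~(c), and forces $C_1\sigma(B, l_1) \subseteq \{r_1, r_2\}$, proving~(d).

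The main obstacle is the boundary case $|\alpha_1| = 1$. Unlike the range $1 < p < \infty$, where this boundary belongs to $\sigma_c$, here it sits inside $\sigma_r$, so I must verify that $(a_k) \notin l_1$ there as well; this is exactly where I would invoke the nonvanishing of the leading coefficient together with the polynomial growth term of the Jordan (non-diagonalisable) case, both already secured in the proof of Theorem~\ref{spectrum_l1}.
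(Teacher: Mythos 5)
Your proposal is correct and follows essentially the same route as the paper: the paper's proof of this theorem simply says ``Proofs are similar as Theorem~\ref{goldberg},'' i.e.\ it uses the Table~1 identities $\sigma_p = A_3\sigma \cup B_3\sigma \cup C_3\sigma$, $\sigma_r = C_1\sigma \cup C_2\sigma$, $\sigma_c = B_2\sigma$ together with $\sigma_p(B,l_1)=\emptyset$, $\sigma_c(B,l_1)=\emptyset$, and the unboundedness of $(B-\lambda I)^{-1}$ from the resolvent computation, exactly as you do. Your explicit treatment of the boundary case $|\alpha_1|=1$ (where $\sigma_r(B,l_1)$, unlike $\sigma_r(B,l_p)$, includes the boundary) merely fills in a detail the paper leaves implicit in its appeal to similarity.
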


\begin{proof}
Proofs are similar as Theorem \ref{goldberg}.
\end{proof}

\begin{theorem}
The operator $B$ satisfies the following relations
\begin{enumerate}
\item[(a)] $ \sigma_{ap}(B, l_1)\supseteq \left\lbrace \lambda \in \mathbb{C} : \left| \frac{2 (r_1 - \lambda) (r_2 - \lambda)}{s_1s_2 - t_1(r_2 - \lambda)- t_2(r_1 - \lambda) + \sqrt{\chi}} \right| \leq 1 \right\rbrace\setminus \{r_1, r_2\} , $
\item[(b)] $\sigma_{ap}(B^*, l_1^*) = \left\lbrace \lambda \in \mathbb{C} : \left| \frac{2 (r_1 - \lambda) (r_2 - \lambda)}{s_1s_2 - t_1(r_2 - \lambda)- t_2(r_1 - \lambda) + \sqrt{\chi}} \right| \leq 1 \right\rbrace,$
\item[(c)] $\sigma_{\delta}(B, l_1) = \sigma_{co}(B, l_1) = \left\lbrace \lambda \in \mathbb{C} : \left| \frac{2 (r_1 - \lambda) (r_2 - \lambda)}{s_1s_2 - t_1(r_2 - \lambda)- t_2(r_1 - \lambda) + \sqrt{\chi}} \right| \leq 1 \right\rbrace.$
\end{enumerate}
\end{theorem}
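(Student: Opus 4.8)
The plan is to obtain all three assertions as formal consequences of Proposition \ref{proposition}, Table 1, and the $l_1$-spectra already computed in this section, exactly mirroring the argument for the $l_p$ case in Theorem \ref{ap_lp}. Throughout I write $\sigma(B, l_1)$ for the closed region $\left\lbrace \lambda : \left| \frac{2(\ar)(\br)}{s_1s_2 - t_1(\br) - t_2(\ar) + \sqrt{\chi}} \right| \leq 1 \right\rbrace$, which is exactly the set appearing on the right-hand side of each statement; the key inputs are that $\sigma_p(B, l_1) = \emptyset$ and that, in contrast to the $l_p$ situation, $\sigma_p(B^*, l_1^*)$ equals this entire closed region rather than its open interior.

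First I would dispose of (b). Using the second identity in part (d) of Proposition \ref{proposition}, namely $\sigma(B, l_1) = \sigma_p(B, l_1) \cup \sigma_{ap}(B^*, l_1^*)$, together with $\sigma_p(B, l_1) = \emptyset$, one gets $\sigma_{ap}(B^*, l_1^*) = \sigma(B, l_1)$, which is the claimed set. Part (c) then follows immediately: by part (b) of Proposition \ref{proposition}, $\sigma_\delta(B, l_1) = \sigma_{ap}(B^*, l_1^*)$, which is the region by what was just shown; and by part (c) of the same proposition, $\sigma_{co}(B, l_1) = \sigma_p(B^*, l_1^*)$, which equals the region by the $l_1$ point-spectrum-of-adjoint result established above. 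Hence $\sigma_\delta(B, l_1) = \sigma_{co}(B, l_1) = \sigma(B, l_1)$, and the coincidence of these two sets is precisely the feature distinguishing the $l_1$ case from $l_p$, tracing back to $\sigma_p(B^*, l_1^*)$ being the full closed region.

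For (a) I would read off from Table 1 that $\sigma_{ap}(B, l_1) = \sigma(B, l_1) \setminus C_1\sigma(B, l_1)$, since among the states making up the spectrum only $C_1$ fails to lie in the approximate point spectrum. Invoking Theorem \ref{goldberg_l1}(d), which gives $C_1\sigma(B, l_1) \subseteq \{r_1, r_2\}$, then yields $\sigma_{ap}(B, l_1) \supseteq \sigma(B, l_1) \setminus \{r_1, r_2\}$, the desired inclusion.

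I do not expect a genuine obstacle here: once the spectrum, the vanishing of $\sigma_p(B, l_1)$, the adjoint point spectrum, and the Goldberg classification of Theorem \ref{goldberg_l1} are in hand, the present theorem is pure bookkeeping through Proposition \ref{proposition}. The only point requiring a little care is that (a) is stated merely as an inclusion: this is forced because Theorem \ref{goldberg_l1} only bounds $C_1\sigma(B, l_1)$ by $\{r_1, r_2\}$ without pinning down whether $r_1$ or $r_2$ actually lands in state $C_1$, so the membership of these two points in $\sigma_{ap}(B, l_1)$ is left undetermined.
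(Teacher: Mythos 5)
Your proposal is correct and takes essentially the same route as the paper: part (a) from the Table 1 identity $\sigma_{ap}(B, l_1) = \sigma(B, l_1) \setminus C_1\sigma(B, l_1)$ together with Theorem \ref{goldberg_l1}(d), and parts (b) and (c) from Proposition \ref{proposition} combined with $\sigma_p(B, l_1) = \emptyset$ and $\sigma_p(B^*, l_1^*) = \sigma(B, l_1)$. The paper's proof is exactly this bookkeeping, stated more tersely; your elaboration, including the remark on why (a) is only an inclusion, fills in the details faithfully.
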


\begin{proof}
Since $\sigma_{ap}(B, l_1) = \sigma(B, l_1) \setminus C_1 \sigma(B, l_1),$ the result in $(a)$ follows from Theorem \ref{goldberg_l1}. The other results follow from Proposition \ref{proposition}.
\end{proof}

\section{Example}
Here some examples are given.
\begin{enumerate}
\item[(i)] Let $B_1$ be an operator of the form \eqref{mainop3} where, $r_1 = 1, r_2 = i,$ $s_1 = 2, s_2 = 1,$ and $t_1 = -i, t_2 = 1.$ Then the spectrum of the operator $B_1$ is given by
\begin{equation*}
\sigma(B_1, l_p)=\sigma(B_1, l_1) = \left\lbrace \lambda \in \mathbb{C} : \left|\frac{2(1 - \lambda)(i - \lambda)}{(1-i)\lambda+\sqrt{2i\lambda^2+(4-4i)\lambda-4}} \right| \leq 1 \right\rbrace.
\end{equation*}

\item[(ii)] 
Let $B_2$ be an operator of the form \eqref{mainop3} where, $r_1 = i, r_2 = 2,$ $s_1 = 1+i, s_2 = 1$ and $t_1 = t_2 = 0$ then the spectrum of the operator $B_2$ is given by the set
\begin{equation*}
\sigma(B_2, l_p)=\sigma(B_2, l_1) = \left\lbrace \lambda \in \mathbb{C} : |(i - \lambda)(2 - \lambda)| \leq \sqrt{2} \right\rbrace.
\end{equation*}

\end{enumerate}

\section{Conclusion}
Many results on the spectrum and fine spectrum of difference operators obtained by earlier researchers can be derived from our results. Here we mention some of them.

\begin{enumerate}
\item[(i)] If $r_1 = r_2 = r,$ $s_1 = s_2 = s$ and $t_1 = t_2 = t$ then the operator $B$ reduces to the operator $B(r,s,t)$ 
and some of the results given in \cite{l1bv3,lpbvp3} follow from our results.

\item[(ii)] If $r_1 = r_2 = r,$ $s_1 = s_2 = s$ and $t_1 = t_2 = 0$ then the operator $B$ reduces to the operator $B(r,s).$ Therefore some of the results given in \cite{l1bv,lpbvp} follow from our results.

\item[(iii)] If $r_1 = r_2 = 1,$ $s_1 = s_2 = -1$ and $t_1 = t_2 = 0$ then the operator $B$ reduces to the difference operator $\Delta$ and the corresponding results given by \cite{deltal1bv} follows from our reults.

\item[(iv)] If $r_1 = r_2 = s,$ $s_1 = s_2 = 1-s$ and $t_1 = t_2 = 0$ where $s$ is real number and $s \neq 0, 1$ we get the Zweier matrix $Z^s$ and some of the results in \cite{zew} follows from our results.

\end{enumerate}

\end{document}